\numberwithin{equation}{section}
\newtheorem*{thm*}{Theorem}
\newtheorem{prop}[equation]{Proposition}
\newtheorem{lem}[equation]{Lemma}%[section]
\theoremstyle{definition}
\newcommand{\Uu}{\mathscr{U}}
\newcommand{\abs}[1]{\left\vert#1\right\vert}
\newcommand{\cx}{{\mathbb{C}}}
\newcommand{\rl}{{\mathbb{R}}}
\newcommand{\D}{\mathbb{D}}
\newcommand{\Z}{\mathbb{Z}}
\newcommand{\N}{\mathbb{N}}
\newcommand{\T}{\mathbb{T}}
\newcommand{\Q}{\mathbb{Q}}
 \DeclareMathOperator{\lcm}{lcm}
\newcommand{\one}{\mathbbm{1}}
\newcommand{\dd}{\mathrm{D}}
\newcommand{\ol}{\overline}
\newcommand{\Identity}{\normalfont\large I}
\newcommand{\rvline}{\hspace*{-\arraycolsep}\vline\hspace*{-\arraycolsep}}
\DeclareMathOperator{\adj}{adj}
\DeclareMathOperator{\diag}{diag}
\title{Bergman kernels of Monomial Polyhedra}
\author[D. Chakrabarti]{Debraj Chakrabarti}
\urladdr{https://people.se.cmich.edu/chakr2d/}
\author[I. Cinzori]{Isaac Cinzori}
\author[I. Gaidhane]{Ishani Gaidhane}
\author[J. Gregory]{Jonathan Gregory}
\author[M. Wright]{Mary Wright}
\email{chakr2d@cmich.edu, cinzo1im@cmich.edu, gaidh1iv@cmich.edu, grego3j@cmich.edu,  wrigh5me@cmich.edu }
\address{Department of Mathematics, Central Michigan University, Mt Pleasant, MI 48859, USA.}
\thanks{Chakrabarti, Gaidhane, Gregory and Wright were supported in part by US National Science
Foundation grant number DMS-2153907. Chakrabarti was also partially supported by a gift from the Simons 
Foundation (number 706445).}
\subjclass[2020]{32A25}
\begin{document}
\maketitle
\begin{abstract} 
The Bergman kernels of monomial polyhedra are explicitly computed.
Monomial polyhedra are a class of bounded pseudoconvex Reinhardt domains defined as sublevel sets of  Laurent monomials. Their kernels are rational functions and are obtained by an application of Bell's transformation formula. 
\end{abstract}

%\tableofcontents
\section{Introduction} 
We will say that a \emph{bounded} domain (open, connected subset) $\Uu_B\subset \cx^n, n\geq 2$ is a \emph{monomial polyhedron} if there is 
an $n\times n$ matrix of integers $B$ such that 
\begin{equation}
    \label{eq-ubdef}
    \Uu_B=\left\{(z_1,\dots, z_n)\in \cx^n: \text{ for each } 1\leq j \leq n, \quad
    \abs{z_1}^{b^j_1}\dots \abs{z_n}^{b^j_n}<1\right\},
\end{equation}
where $b^j_k\in \Z$ denotes the entry at the $j$-th row and $k$-th column of $B$, where it is assumed in \eqref{eq-ubdef} that the power $\abs{z_k}^{b^j_k}$ is well-defined for each $z\in \Uu_B$ and each $1\leq j, k \leq n$,
i.e., if $b^j_k<0$ for some $j,k$, then $z_k\not=0$ for each point $z\in \Uu_B$.
We summarize the situation by saying that $B$ is the defining matrix of the domain $\Uu_B$, or simply that $\Uu_B$ is defined by $B$. Monomial polyhedra are clearly Reinhardt, and looking at their log absolute image, we see immediately that they are also pseudoconvex. 
If $B=I$ is the $n\times n$ identity matrix, then $\Uu_I$ is the unit polydisc, which may be regarded as a trivial example.
A  famous nontrivial example of a monomial polyhedron is the \emph{classical Hartogs triangle}:
\[ \{(z_1,z_2)\in \cx^2: \abs{z_1}<\abs{z_2}<1\},\] a venerable source 
of counterexamples in complex analysis, which is easily seen to be a monomial polyhedron defined by the matrix $\begin{pmatrix} 1&-1\\0&\phantom{-}1\end{pmatrix}.$
Pathologies of the Hartogs triangle (e.g., lack of Stein neighborhood bases) generalize to nontrivial monomial polyhedra, explaining the importance of these domains in complex analysis. Monomial polyhedra and domains closely associated to them have been studied extensively in complex and harmonic analysis (see \cite{nagelduke, nagelpramanik, summer20}).

In the last decade, there has been activity surrounding domains generalizing the Hartogs triangle, their Bergman kernels, and the Bergman projection on these domains. (For general information on the Bergman kernel and projection, see, e.g. \cite{krantzbergman, zhubergman}). The current interest began with the discovery in \cite{chakzeytuncu,EdMc1,EdhMcN16} of remarkable $L^p$-mapping properties of the Bergman projection on the \emph{Generalized Hartogs Triangle}, the domain $\Omega_\gamma=\{\abs{z_1}^\gamma<\abs{z_2}<1\}, \gamma >0$, which is a monomial polyhedron if $\gamma$ happens to be rational. 
Since a first step in investigating the Bergman projection is to understand the Bergman kernel, a series of studies have been directed toward the goal of obtaining the Bergman kernel of various generalizations of the Hartogs triangle. The kernel of the classical Hartogs triangle has been known for a long time and occurs explicitly in \cite{bremmerman}.
In \cite{lukethesis, Edh16}, the kernel of the Generalized Hartogs Triangle $\Omega_\gamma$ was obtained when $\gamma$ is either a positive integer or the reciprocal of a positive integer. In \cite{park}, the kernel of the domain $\{\abs{z_1}^{k_1}<\abs{z_2}^{k_2}< \abs{z_3}^{k_3}\}\subset \cx^3$ was obtained. In \cite{pjm,Chen17,zhang1} other types of generalizations of the Hartogs triangle were studied and explicit kernels were obtained. In each of these works, ad hoc methods based on Bell's transformation formula were used and led to rather complicated expressions for the kernel. 

The original motivating problem of determining the values of $p$ for which the Bergman projection is bounded in $L^p$-norm can be studied once the kernel is known (see \cite{Chen17,zhang1,zhang2,ChEdMc19} etc., see also the survey \cite{zeytuncu20}). 
In \cite{summer20}, the problem was studied in the context of the monomial polyhedra \eqref{eq-ubdef}, using a representation of these domains as quotients, thus avoiding the question of computing the Bergman kernel (similar ideas are found in \cite{chakzeytuncu, CKY19}). While the Bergman projection itself is not bounded in $L^p$ on a monomial polyhedron if $p$ does not lie in a certain bounded interval, it was shown in \cite{mbp} how to construct an alternative bounded projection operator from $L^p(\Uu_B)$ to its holomorphic subspace.

 In \cite{summer20}, it was shown using Galois theory that the Bergman kernels of
the domains $\Uu_B$ are rational functions of the coordinates (this is also related to the results of  \cite{bellrat}). The question naturally arises of explicitly computing the Bergman kernel of 
$\Uu_B$ in terms of the $n\times n$ integer matrix $B$. When $n=2$, this question was answered in the recent preprint  \cite{almughrabi}. However, some of the ideas of \cite{almughrabi} are specific to the two-dimensional case and do not generalize easily to higher dimensions. In this paper, we explicitly compute the Bergman kernel of the domain \eqref{eq-ubdef} in terms of the matrix $B$ for any $n\geq 2$, obtaining formula \eqref{eq-kernel2} below.
This formula is of interest from many points of view. First, it adds infinite examples to the list of domains for which it is at all possible to write down a fully explicit Bergman kernel, and it generalizes and simplifies the computations in the special cases mentioned above (see below in Section~\ref{sec-special} for some examples). 
The fact that the kernel is rational is significant in view of the continuing interest in the algebraic nature of the Bergman kernel (see \cite{eben} for some recent results).
Second, the explicit kernel is essential to understanding the regularity of the Bergman projection in norms such as the Sobolev norms (see \cite{EdhMcN20}, where the problem is studied on Generalized Hartogs Triangles starting from the precise form of the kernel). Third, the computation uses combinatorial and algebraic ideas which are of interest in themselves.  We believe that these ideas may be of relevance in the study of Bergman kernels of other domains obtained as quotients, such as the  domains in $\cx^2$ considered in \cite{dallara}, which arise as quotients under  the action of non-Abelian finite reflection groups.

\subsection*{Acknowledgements} This paper reports work done in an 
undergraduate summer research project under the mentorship of Chakrabarti 
and Cinzori, and partially funded by an NSF grant. We thank Central Michigan University for providing space and other resources for this work.  We  also thank Rasha Almughrabi for explaining the details of her computation of the kernel of two-dimensional monomial polyhedra.

\section{A formula for the Bergman kernel of \texorpdfstring{$\Uu_B$}{uub}}
\subsection{Some notation} We introduce, following \cite{summer20}, some extended ``multi-index" type
notation to simplify the writing of our formulas.
\begin{enumerate}[wide]
    \item  For positive integers $m,n$,
    we let $\Z^{m\times n}$ denote the collection of $m\times n$ matrices with integer entries. Similarly,
    $\cx^{m\times n}$ is the space of complex $m\times n$ matrices.     For an $n\times n$ square matrix $A$ with $n\geq 2$, and for $1\leq j,k \leq n$, we denote: %by
    \begin{equation}\label{eq-matrixentry}
        [A]^j_k \text{ or } a^{j}_k:= \text{ the entry of $A$ at the intersection of the $j$-th row and the $k$-th column}.
%        [A]^j_k \text{ or }  a^{j}_k= \text{ entry of $A$ at the intersection of the $j$-th row and the $k$-th column}. 
    \end{equation}
      \item For an $n\times n$ matrix $A$, and $1\leq j \leq n$, we denote by $a^j$ the $j$-th row of $A$, so that $a^j$ is a row vector of length $n$. Similarly, $a_j$ is the $j$-th column of $A$ 
    and is therefore a column vector of height $n$. If $A\in \Z^{n\times n}$, using the definition
    \eqref{eq-matrixentry}, we can write for $1\leq j, k \leq n$:
    \begin{equation}
        \label{eq-rowcolumn}a^j =(a^j_1,\dots, a^j_n)\in \Z^{1\times n}, \quad\text{ and } a_k= (a_k^1,\dots, a_k^n)^T\in \Z^{n\times 1}.
    \end{equation}
    \item For a matrix $M\in \cx^{n\times n}$
    , denote by $\adj M\in \cx^{n\times n}$ the \emph{adjugate matrix} of $M$. Recall that, by definition, the entry 
at the $j$-th row and $k$-th column of $\adj M$ is given by
\begin{equation}
    \label{eq-adjdef}
    [\adj M]^j_k=(-1)^{j+k}\det\left(M[k,j]\right), 
\end{equation}
where $M[k,j]$ denotes the $(n-1)\times (n-1)$ submatrix of $M$ obtained by removing the $k$-th row and $j$-th column of $M$. For $M$  invertible,  $ \adj M = \det M\cdot M^{-1}$ by  Cramer's rule.

    \item Notice that according to our convention, $\Z^{1\times n}$
    denotes the collection of integer row vectors of length $n$ and 
    $\Z^{n\times 1}$ denotes the collection of integer column vectors of height $n$.
   The elements of the complex Euclidean space $\cx^n$ are thought of as \emph{column} vectors, i.e., we identify $\cx^n\cong \cx^{n\times 1}.$
     To simplify writing, we write column vectors as transposes of row vectors, where transposition is denoted by a superscript $T$: 
     $ \begin{pmatrix}
         z_1\\ \vdots \\z_n 
     \end{pmatrix}
     = (z_1,\dots, z_n)^{T}.$
   \item We use the standard multi-index power notation: if $z=(z_1,\dots, z_n)^T\in \cx^{n\times 1}$ is an $n\times 1$
    column vector and $\alpha=(\alpha_1,\dots, \alpha_n)\in \cx^{1\times n}$ is a $1\times n$ row vector, we denote
    \begin{equation}
        \label{eq-multiindex1}
         z^\alpha = \prod_{j=1}^n z_j^{\alpha_j}= z_1^{\alpha_1}\dots z_n^{\alpha_n},
    \end{equation}
where each power $z_j^{\alpha_j}$ is assumed to be well-defined and where we use the convention $0^0=1$.  
    \item We denote by $\N$ the collection of nonnegative integers.
    Given a matrix $B\in \Z^{n\times n}$ let $B_+\in \N^{n\times n}$ and $B_-\in \N^{n\times n}$ be matrices given by
    \[ (b_+)^j_k= \max\{b^j_k,0 \}, \quad (b_-)^j_k=\max\{-b^{j}_k,0\}.\]
    More succinctly, $B_+=\max\{B,0\}$ and
    $B_-=\max\{-B,0\}$, where the maxima are taken elementwise and $0$ denotes the $n\times n$ zero matrix. 
    As usual, we let $(b_+)^j, (b_-)^j$ be the rows of $B_+, B_-$, and a similar notation is used for the columns.
\end{enumerate}
\subsection{The function \texorpdfstring{$\dd$}{dd}} Let $k,r$ be integers, with $k\geq 1$. The function $\dd$, introduced in \cite{pjm} (and occurring implicitly in \cite{park, zhang1}), is defined by the relation
\begin{equation}
    \label{eq-Dk-gen}
     \left(\frac{1-x^k}{1-x}\right)^2
     = \sum_{r \in \Z} \dd_k(r)x^r. 
\end{equation}
Since the left-hand side of the above equation is a polynomial, for a fixed $k\geq 1$ the quantity $\dd_k(r)$ vanishes for all negative $r$ and for all but finitely many positive values of $r$. A computation shows that
   \begin{equation}\label{eq-Dk}
     \dd_k(r) = \begin{cases} 
              1+r, & 0\leq r \leq k-1,\\
              2k-(1+r), & k\leq r \leq 2k-2,\\
              0, & r < 0 \text{ or } r > 2k-2.
              \end{cases}
 \end{equation}
\subsection{Two assumptions on \texorpdfstring{$B$}{B}}
It is clearly no loss of generality to assume that the integer matrix $B\in \Z^{n\times n}$ defining the bounded
domain $\Uu_B$ has the following two properties:
\begin{enumerate}[wide]
    \item  The determinant of the defining matrix is positive, i.e., 
\begin{subequations}
  \begin{equation}
         \label{eq-detb1}
         \det B >0.
     \end{equation}
Indeed, we must have $\det B\not=0$ since otherwise $\Uu_B$ is not an open set. Further, if the rows of the matrix $B$ are permuted, the new matrix continues to 
define the same monomial polyhedron, so we may assume \eqref{eq-detb1} without loss of generality. 
\item The $n$ entries of each row of $B$ are relatively prime. We write this as
    \begin{equation}
        \label{eq-gcdbj1} \gcd(b^j)=1, \quad 1\leq j \leq n.
    \end{equation}
   \end{subequations}
Indeed, for $1\leq j\leq n$, if $d$ is a positive integer dividing  each entry of the $j$-th row $b^j$ of the matrix $B$, then dividing each entry $b^j_k$ of this row $b^j$ by $d$ results in a matrix which continues to define the same domain.
Therefore, we may divide each 
row of the defining matrix of a monomial polyhedron by the gcd of that row to obtain a new matrix that defines the same monomial polyhedron and whose rows now satisfy \eqref{eq-gcdbj1}.
\end{enumerate}

\subsection{The main result}In the statement of this result, as well as in the sequel, we denote by $\one$ the $1\times n$ row vector each of whose components is  1:
        \begin{equation}
        \label{eq-one}
        \one = (1,\dots,1) \in \Z^{1\times n}.
    \end{equation}
\begin{thm*}
%\label{thm-kernel2}
Assume that the matrix $B$ satisfies \eqref{eq-gcdbj1} and \eqref{eq-detb1}.
 Denoting $t=(t_1,\dots, t_n)^T$ with $t_j=p_j\cdot\ol{q_j}$, the Bergman kernel of $\Uu_B$ is
 \begin{equation}
        \label{eq-kernel2}
        K_{\Uu_B}(p,q)=\frac{1}{\pi^n\cdot {(\det B)^{n-1}}}\cdot \frac{\sum\limits_{\nu\in \N^{1\times n}}C_B(\nu)t^{\nu}}{\prod\limits_{j=1}^n (t^{ (b_-)^j}-t^{(b_+)^j})^2},
    \end{equation}
    where  
        \begin{equation}
            \label{eq-CB}
            C_B(\nu)= \prod\limits_{j=1}^n \dd_{\det B}\left(\left(\nu-2\one B_{-}+\one\right)[\adj B]_j-1\right), \quad \nu\in \Z^{1\times n},
        \end{equation}
        with $[\adj B]_j\in \Z^{n\times 1}$ being the $j$-th column of the adjugate matrix of $B$ and $\dd$ as defined in \eqref{eq-Dk-gen}.
    Further,  we have $C_B(\nu)=0$, except perhaps when $\nu=(\nu_1,\dots, \nu_n)$ satisfies
    \begin{equation}
        \label{eq-nujbounds}
        -1+\xi_j \leq \nu_j \leq 2 \sum_{k=1}^n \abs{b_j^k}-1 -\xi_j, \quad 1\leq j \leq n,
    \end{equation}
        with $\xi_j$ being the ceiling
        \begin{equation}
            \label{eq-xij}\xi_j=\left\lceil\frac{1}{\det B}\cdot\sum\limits_{k=1}^n\abs{b_j^k} \right\rceil.
        \end{equation} 
        The kernel $K_{\Uu_B}$ is a  rational function of the variables $t_1,\dots, t_n$, and the representation \eqref{eq-kernel2} is canonical in the sense that the numerator $\sum_{\nu\in \N^{1\times n}}C_B(\nu)t^{\nu}$ and the denominator $\prod_{j=1}^n (t^{ (b_-)^j}-t^{(b_+)^j})^2$ are polynomials in $\cx[t_1,\dots, t_n]$ without a common  factor. 
\end{thm*}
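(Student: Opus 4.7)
My strategy is to compute the Bergman kernel of $\Uu_B$ directly from its Reinhardt orthogonal series and then bring the resulting sum into the claimed rational form via a combinatorial identity relating an alternating binomial sum to $\dd_{\det B}$. Bell's transformation formula is not strictly needed in this approach: the norm integrals can be evaluated by hand using a logarithmic change of variables, and the numerator $C_B$ emerges cleanly from the manipulation.

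Since $\Uu_B$ is Reinhardt, the monomials $\{z^\alpha\}_{\alpha \in S_B}$ indexed by the set $S_B \subset \Z^{1\times n}$ of square-integrable exponents form an orthogonal basis of $A^2(\Uu_B)$. Passing to polar coordinates $z_j=e^{x_j+i\theta_j}$, the defining inequalities become $Bx<0$ and the Euclidean volume form equals $e^{2\one\cdot x}\,dx\,d\theta$, so
\[
\|z^\alpha\|^2 = (2\pi)^n \int_{\{Bx<0\}} e^{(2\alpha + 2\one)\cdot x}\,dx.
\]
The linear substitution $y=Bx$ (Jacobian $\det B$) sends the log-image to the negative orthant, evaluating the resulting Laplace integral to
\[
\|z^\alpha\|^2 = \frac{\pi^n (\det B)^{n-1}}{\prod_{k=1}^n \mu_k(\alpha)}, \qquad \mu_k(\alpha):=(\alpha+\one)\cdot[\adj B]_k,
\]
which is finite iff $\mu_k(\alpha)>0$ for every $k$; since $\mu_k\in\Z$, this is $\mu_k\ge 1$, determining $S_B$. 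Hence
\[
K_{\Uu_B}(p,q) = \frac{1}{\pi^n(\det B)^{n-1}}\sum_{\alpha\in S_B}\prod_{k=1}^n\mu_k(\alpha)\,t^\alpha.
\]

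To recognize this series as the right-hand side of \eqref{eq-kernel2}, I would multiply by $\prod_j(t^{(b_-)^j}-t^{(b_+)^j})^2=t^{2\one B_-}\prod_j(1-t^{b^j})^2$ and expand the finite product as $\sum_{\epsilon\in\{0,1,2\}^n}\prod_j\binom{2}{\epsilon_j}(-1)^{\epsilon_j}t^{\epsilon B}$. Reindexing by $\nu:=\alpha+\epsilon B+2\one B_-$ and using the key identity $(\epsilon B)\cdot[\adj B]_k=\det B\cdot\epsilon_k$, the coefficient of $t^\nu$ separates across $k$ into $C_B(\nu)=\prod_k T_k(\nu)$, where
\[
T_k(\nu):=\sum_{\substack{\epsilon\in\{0,1,2\}\\ M_k(\nu)-\det B\cdot\epsilon\ge 1}}\binom{2}{\epsilon}(-1)^{\epsilon}\bigl(M_k(\nu)-\det B\cdot\epsilon\bigr), \qquad M_k(\nu):=(\nu-2\one B_-+\one)\cdot[\adj B]_k.
\]
A case analysis on $M_k$ with thresholds at $0,\det B,2\det B,3\det B$ confirms $T_k(\nu)=\dd_{\det B}(M_k(\nu)-1)$, which is \eqref{eq-CB}. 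The support bound \eqref{eq-nujbounds} follows because $\dd_{\det B}$ is supported on $\{0,\ldots,2\det B-2\}$, so each factor forces $1\le M_k(\nu)\le 2\det B-1$; inverting the linear map $M\mapsto\nu=MB/\det B+2\one B_--\one$ and extremizing $\nu_j$ over this box, then rounding to the integers, produces exactly the stated inequalities in terms of $\xi_j$.

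For the canonical form, I would use $\gcd(b^j)=1$ to conclude that each binomial $t^{(b_-)^j}-t^{(b_+)^j}$ is irreducible in $\cx[t_1,\ldots,t_n]$ and that the $n$ such binomials are pairwise non-associate, and then argue that since $K_{\Uu_B}(p,p)$ must blow up along every boundary hypersurface $\abs{z}^{b^j}=1$, no irreducible factor of the denominator can divide the numerator. The main technical obstacle is the combinatorial identity $T_k=\dd_{\det B}(M_k-1)$: the case-by-case verification is routine once guessed, but requires careful bookkeeping of the admissibility constraint $M_k-\det B\cdot\epsilon\ge 1$ through the reindexing and a clean justification of the interchange of summations. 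A secondary obstacle is the coprimality step, which needs a short Laurent-polynomial irreducibility argument beyond the boundary blow-up heuristic.
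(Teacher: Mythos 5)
Your approach is correct in substance and takes a genuinely different route from the paper's. The paper invokes Bell's transformation formula for the proper monomial map $\phi_A\colon\Omega\to\Uu_B$ coming from the quotient representation in \cite{summer20}, sums over the deck group $\Gamma$, and identifies the surviving coefficients $\Lambda_\theta$ via a character argument; you instead compute the Reinhardt orthogonal series from scratch. The norm computation via $y=Bx$ and the factoring Laplace integral over the negative orthant is right, and the description $S_B=\{\alpha:\mu_k(\alpha)\ge 1\text{ for all }k\}$ is also right --- convergence of the cone integral automatically forces $\alpha_j\ge 0$ whenever $\Uu_B$ meets $\{z_j=0\}$, since $-e_j$ then lies in the closure of the log cone and is a nontrivial nonnegative combination of its extreme rays. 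The crucial observation that $(\epsilon B)[\adj B]_k=\det B\cdot\epsilon_k$ makes both $\mu_k(\alpha)=M_k(\nu)-\det B\,\epsilon_k$ and the admissibility indicator depend on $\epsilon$ only through $\epsilon_k$, so the double sum factors across $k$ after the shift $\nu=\alpha+\epsilon B+2\one B_-$; and the resulting truncated alternating sum $T_k(\nu)$ does equal $\dd_{\det B}(M_k(\nu)-1)$, as one checks on the four integer ranges $M_k\le 0$, $1\le M_k\le\det B$, $\det B+1\le M_k\le 2\det B$, and $M_k\ge 2\det B+1$, where the constraint $M_k-\det B\,\epsilon\ge 1$ admits $0,1,2,3$ of the binomial terms respectively. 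Your route is more elementary --- no Bell formula, no covering-map machinery --- at the price of guessing and verifying this identity; the paper's route exposes the quotient structure, which both predicts the rationality a priori and is reused for the special cases of Section~\ref{sec-special}.

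One step does not go through as stated. For canonicity you argue that ``$K_{\Uu_B}(p,p)$ must blow up along every boundary hypersurface, so no irreducible factor of the denominator can divide the numerator,'' and you treat the irreducibility lemma as the only remaining obstacle. Mere blow-up is not enough: if the numerator contained a single factor of $p_j=t^{(b_-)^j}-t^{(b_+)^j}$, the reduced denominator would still carry $p_j$ to the first power, and the kernel would still blow up along that face. What is actually needed is the quantitative lower bound $K_{\Uu_B}(z,z)\ge C\,\delta(z)^{-2}$ near a smooth pseudoconvex boundary point, which the paper takes from \cite{fu94} (via an Ohsawa--Takegoshi induction); this is what rules out a first-order pole in $r(z)=p_j(\abs{z_1}^2,\dots,\abs{z_n}^2)$. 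The irreducibility step you flag (via $\gcd(b^j)=1$ and Gauss's lemma) is precisely the paper's Lemma~\ref{lem-irreducible} and is indeed necessary, but it must be paired with the quantitative rate, not with blow-up alone.
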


\textbf{Remark:} Geometrically, the boundary of a nontrivial monomial polyhedron has a non-Lipschitz singularity at the origin, and the rest of the boundary is piecewise Levi-flat
and consists of smooth Levi-flat pieces that meet transversely. Since we know from 
\cite{summer20} that the kernel is rational, the known boundary behavior (see \cite{fu}) of the diagonal kernel $K_{\Uu_B}(z,z)\sim \delta(z)^{-2}$ near smooth Levi-flat boundary points, where $\delta$ is the distance to the boundary, already predicts the form of the denominator. However, it does not seem easy to  deduce information about the behavior of the kernel as $z\to 0$ without actually computing it, and it is this behavior that is of greatest interest in applications to the mapping properties of the Bergman projection.

\section{Preliminaries}
\subsection{Matrix powers of vectors} 
     Let $A\in \cx^{n\times n}$ be an $n\times n$ matrix, and let $z=(z_1,\dots, z_n)^T\in\cx^{n\times 1}$
    be an $n\times 1$ column vector. We define a ``matrix power"  $z^A\in \cx^{n\times 1}$ by the formal expression:
    \begin{equation}
        \label{eq-multiindex2}
        z^A = (z^{a^1},\dots, z^{a^n})^T.
    \end{equation}
    % where as in \eqref{eq-rowcolumn}, $a^j\in \cx^{1\times n}$ is the $j$-th row of the matrix $A$,
    % and it is assumed that the vector powers $z^{a^j}$ exist for $1\leq j \leq n$. 
    For each $k$, on a domain $U_k\subset \cx$,
    if we  choose for each $1\leq j \leq n$ a local branch of $z_k\mapsto (z_k)^{a^j_k}$ for each entry $a^j_k$ 
    of the column $a_k$ of the matrix $A$, 
we obtain a locally defined holomorphic mapping formally given by $z\mapsto z^A$. This defines a 
holomorphic mapping defined on $U_1\times \dots\times U_n \subset \cx^{n\times 1}$ and taking values in $\cx^{n\time 1}:$
\begin{equation}
    \label{eq-monomialmap}
     \phi_A(z)=z^A.
\end{equation}
If $A\in \N^{n\times n}$ is a matrix of nonnegative integers, then $z^A$ is uniquely defined for all $z\in \cx^{n\times 1}$  and $\phi_A: \cx^{n\times 1}\to \cx^{n\times 1}$ is an entire holomorphic mapping. 
  The following is easily proved (see \cite[Lemma~3.8]{summer20} or \cite[Lemma~4.1]{nagelduke}) and can be thought to be a generalization of the formula $\dfrac{d}{dx}x^n=nx^{n-1}$.
    \begin{prop} \label{prop-powerrule} 
    Let $\phi_A$ be locally defined on some open set of $\cx^{n\times 1}$, as in \eqref{eq-monomialmap}.  We have
        $\det \phi_A'(z)= \det A\cdot z^{\one A-\one}.$
    \end{prop}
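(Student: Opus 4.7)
The plan is to reduce the Jacobian determinant of $\phi_A$ to a determinant of $A$ by factoring out two diagonal matrices, using only the usual partial-derivative rule for monomials together with multiplicativity of the determinant.

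First I would write out the entries of the Jacobian explicitly. Since the $j$-th component of $\phi_A$ is $\phi_A^j(z)=z^{a^j}=z_1^{a^j_1}\cdots z_n^{a^j_n}$, formal logarithmic differentiation (which is valid locally where the chosen branches are holomorphic) gives
\[
\frac{\partial \phi_A^j}{\partial z_k}(z)=a^j_k\cdot\frac{z^{a^j}}{z_k},\qquad 1\leq j,k\leq n.
\]
This is the only computation needed, and it just uses $\frac{d}{dx}x^\alpha=\alpha x^{\alpha-1}$ applied one variable at a time; the integrality of entries is irrelevant here, so the derivation works for arbitrary $A\in\cx^{n\times n}$.

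Second, I would recognize that the matrix with entries $a^j_k\cdot z^{a^j}/z_k$ factors as the product
\[
\phi_A'(z)=\diag\bigl(z^{a^1},\dots,z^{a^n}\bigr)\cdot A\cdot\diag\bigl(z_1^{-1},\dots,z_n^{-1}\bigr).
\]
Taking determinants and using multiplicativity yields
\[
\det\phi_A'(z)=\Bigl(\prod_{j=1}^n z^{a^j}\Bigr)\cdot\det A\cdot\Bigl(\prod_{k=1}^n z_k^{-1}\Bigr).
\]

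Third, I would convert the two scalar products back into multi-index notation using the convention \eqref{eq-multiindex1}. The product of the row-monomials gives $\prod_{j=1}^n z^{a^j}=z^{\sum_{j=1}^n a^j}=z^{\one A}$, since $\one A$ is the row vector whose $k$-th entry is $\sum_j a^j_k$. Similarly $\prod_{k=1}^n z_k^{-1}=z^{-\one}$. Combining these gives $\det\phi_A'(z)=\det A\cdot z^{\one A-\one}$, as claimed.

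There is no genuine obstacle: the only subtle point is making sure the factorization into diagonal matrices is legitimate at points where some $z_k$ may be zero when $A\in\N^{n\times n}$. This is handled by noting that both sides of the identity are holomorphic on $\cx^n$ in the nonnegative-integer case (with the convention $0^0=1$), so the identity, once established on the dense open set where all $z_k\neq 0$, extends by continuity; in the general case, $\phi_A$ is defined only where the branches exist, and the factorization is valid throughout this domain.
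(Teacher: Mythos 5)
Your argument is correct and complete: the Jacobian entry computation, the factorization $\phi_A'(z)=\diag(z^{a^1},\dots,z^{a^n})\cdot A\cdot\diag(z_1^{-1},\dots,z_n^{-1})$, and the identification $\prod_j z^{a^j}=z^{\one A}$ all check out, and your closing remark correctly handles the points where some $z_k=0$ in the nonnegative-integer case. The paper itself omits the proof, deferring to \cite[Lemma~3.8]{summer20} and \cite[Lemma~4.1]{nagelduke}, and your diagonal-factorization computation is precisely the standard argument those references use, so there is nothing to add.
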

\subsection{Monomial maps}
    If it happens that $A\in \Z^{n\times n}$, then $\phi_A$ is a globally defined single-valued map (except for a polar set), known as a \emph{monomial map}. 
    
    To discuss the basic properties of monomial maps, we introduce some more notation. 
    \begin{enumerate}[wide]
        \item  Let 
    \[ \cx^*=\cx\setminus \{0\} \quad \text{ and } \T=\{z\in \cx: \abs{z}=1\},\]
    and note that these are groups under complex multiplication. 
    \item We  let $\exp: \cx^{n\times1} \to (\cx^*)^{n\times 1}$ be the componentwise exponential map
    \[ \exp((z_1,\dots, z_n)^T)= (e^{z_1},\dots, e^{z_n})^T.\]
    \item Given matrices or vectors $z,w$ of the same size, we denote by $z\odot w$ the elementwise
    (or Hadamard-Schur) product of $z$ and $w$, which is therefore a matrix or vector of the same size as $z$ and $w$. For example, if $z,w\in \cx^{n\times 1}$ are column vectors of height $n$,
    then $z\odot w\in \cx^{n\times 1}$ is the column vector of height $n$ whose $j$-th entry is
    $z_j w_j$.
    \end{enumerate}

We now summarize the properties of the monomial map $\phi_A$ (for proof, see \cite{summer20}).
     Recall that a \emph{regular} covering map $\pi:E\to B$ is a covering map where the group $\Gamma$ of
    deck transformations acts transitively on each fiber $\pi^{-1}(x), x\in B$. One can then identify $B$ to the topological quotient 
    $E/\Gamma$, and identify $\pi$ to the quotient map.
    \begin{prop}\label{prop-phiA} Suppose that $A\in \N^{n\times n}$. Then the holomorphic mapping $\phi_A:\cx^n\to \cx^n$ restricts to a regular covering map from $(\cx^*)^n$ to $(\cx^*)^n$ where $\cx^*=\cx\setminus\{0\}$. The deck transformation group $\Gamma$ of
    the regular covering $\phi_A$ is isomorphic to the group \begin{subequations}
    \begin{align} 
             \Gamma &= \{ \gamma\in \T^n: \gamma^A=\one^T\}\label{eq-gammadef1}\\
     &= \{\exp\left(2\pi i A^{-1} \nu\right), \nu \in \Z^{n\times 1}\},\label{eq-gammadef2}
    \end{align}
          \end{subequations}
          where the action of the group $\Gamma$ on $\cx^{n\times 1}$ is given by
          \begin{equation}
              \label{eq-gammaaction}
              (\gamma,z)\mapsto \gamma\odot z, \quad \gamma\in \Gamma, z\in \cx^{n\times 1}.
          \end{equation}
              The order of the group $\Gamma$ is given by
   \begin{equation}
       \label{eq-absgamma} \abs{\Gamma}= \abs{\det A}.
   \end{equation}
    \end{prop}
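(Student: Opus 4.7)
The plan is to realize $\phi_A$ as the quotient of a linear automorphism of $\cx^n$ by the exponential covering $\exp:\cx^n\to(\cx^*)^n$. Introduce the linear endomorphism $L_A:\cx^n\to\cx^n$ given by $L_A(z)=Az$. A short calculation using $(\exp(z))^{a^j}=\exp(a^j z)$ for each row $a^j$ of $A$ produces the commutative square
\[
\begin{CD}
\cx^n @>L_A>> \cx^n \\
@V\exp VV @VV\exp V \\
(\cx^*)^n @>\phi_A>> (\cx^*)^n.
\end{CD}
\]
Nonnegativity of the entries of $A$ ensures $\phi_A$ is entire, and since a power of a nonzero complex number is nonzero, $\phi_A$ does restrict to a map $(\cx^*)^n\to(\cx^*)^n$. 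Invertibility of $A$ is implicit in the statement through the appearance of $A^{-1}$ and is in any case forced if $\phi_A$ is to cover the same-dimensional space $(\cx^*)^n$; I assume $\det A\neq 0$ throughout.

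From the diagram I would read off the covering structure. The vertical $\exp$ is a regular covering with deck group $\Lambda := 2\pi i\Z^{n\times 1}$ acting by translation. Integrality of $A$ gives $L_A(\Lambda)\subset\Lambda$, so $\exp\circ L_A=\phi_A\circ\exp$ is itself a regular covering of $(\cx^*)^n$, with deck group $L_A^{-1}(\Lambda)=2\pi iA^{-1}\Z^{n\times 1}$, and $\Lambda$ sits inside it as a (normal, since abelian) subgroup. By the Galois correspondence for regular coverings, the factorization $\phi_A\circ\exp=\exp\circ L_A$ exhibits $\phi_A$ as the intermediate regular covering obtained by quotienting out $\Lambda$, so its deck group is $\Gamma\cong L_A^{-1}(\Lambda)/\Lambda$. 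Tracing through $\exp$, the coset of $2\pi iA^{-1}\nu$ acts on $(\cx^*)^n$ by elementwise multiplication by $\exp(2\pi iA^{-1}\nu)\in\T^n$, which is the action \eqref{eq-gammaaction} and the description \eqref{eq-gammadef2}.

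The agreement of \eqref{eq-gammadef1} with \eqref{eq-gammadef2} is a brief calculation: writing $\gamma=\exp(2\pi i\theta)$ with $\theta\in\rl^{n\times 1}$, we have $\gamma^A=\exp(2\pi iA\theta)$, so $\gamma^A=\one^T$ holds iff $A\theta\in\Z^{n\times 1}$, iff $\theta\in A^{-1}\Z^{n\times 1}$. For the order \eqref{eq-absgamma}, multiplication by $A$ gives an isomorphism of $\Z$-modules $A^{-1}\Z^{n\times 1}/\Z^{n\times 1}\to\Z^{n\times 1}/A\Z^{n\times 1}$, so $\abs{\Gamma}=\abs{\Z^{n\times 1}/A\Z^{n\times 1}}=\abs{\det A}$ by the Smith normal form (equivalently, the elementary divisor theorem for $\Z$-modules).

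The main technical step is the covering-space descent in the second paragraph, namely the identification of $\phi_A$ as the regular covering with deck group $L_A^{-1}(\Lambda)/\Lambda$ via the inclusion $L_A(\Lambda)\subset\Lambda$ (equivalently, the integrality of $A$). The remaining items are essentially bookkeeping, once one is willing to pass freely between translations on $\cx^n$ and elementwise multiplications on $(\cx^*)^n$ through $\exp$.
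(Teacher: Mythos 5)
Your proof is correct. Note that the paper does not actually prove Proposition~\ref{prop-phiA} — it is quoted from \cite{summer20} — but your argument (lifting $\phi_A$ through the universal covering $\exp:\cx^{n\times 1}\to(\cx^*)^n$ via the square $\phi_A\circ\exp=\exp\circ L_A$, identifying the deck group as $L_A^{-1}(2\pi i\Z^{n\times 1})/2\pi i\Z^{n\times 1}\cong A^{-1}\Z^{n\times 1}/\Z^{n\times 1}$, and computing its order as $\abs{\Z^{n\times 1}/A\Z^{n\times 1}}=\abs{\det A}$ by Smith normal form) is the standard one and is essentially the argument of the cited reference; all the steps, including the normality of $\Lambda$ in the abelian deck group and the translation-to-multiplication dictionary under $\exp$, are sound.
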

\subsection{Monomial polyhedra as quotient domains} 
The following representation of monomial polyhedra as quotients was first proved in \cite{summer20}.

\begin{prop}\label{prop:covering-mono-polyh}
Let $B\in\Z^{n\times n}$ be the defining matrix of the domain $\Uu_B$ of \eqref{eq-ubdef}.
\begin{enumerate}
    \item (\cite[Proposition~3.2]{summer20}) The matrix $B$ is invertible, and each entry of $B^{-1}$ is nonnegative.
    \item (\cite[Theorem~3.12]{summer20}) Let  
    \begin{equation}
    \label{eq-A}
    A= \adj B = \det B\cdot B^{-1}\in \N^{n\times n}.
\end{equation}
    Then there exists a product domain 
\begin{equation}\label{eq-Omegaproduct}
\Omega = U_1 \times \dots \times U_n \subset \cx^{n\times 1},
\end{equation}
with each factor $U_j$ either a unit disc $\D=\{\abs{z}<1\}\subset \cx$ or a unit punctured disc 
$\D^*=\{0<\abs{z}<1\}\subset \cx$, such that the monomial map $\phi_{A}: \cx^n \to \cx^n$ of \eqref{eq-monomialmap} restricts to a proper holomorphic  map $\phi_A:\Omega \to \Uu_B$.
This map further restricts to a regular covering map 
\[ \phi_A: \Omega\cap (\cx^*)^{n\times 1} \to \Uu_B\cap (\cx^*)^{n\times 1},\]
whose group of deck transformations is isomorphic to  the group $\Gamma\subset \T^{n\times 1}$ defined in  \eqref{eq-gammadef1} and \eqref{eq-gammadef2}, and the group $\Gamma$ acts on $\Omega$
via the action \eqref{eq-gammaaction}.
\end{enumerate}

\end{prop}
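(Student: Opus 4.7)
The plan is to apply Bell's transformation formula to the proper covering $\phi_A\colon \Omega\cap (\cx^*)^n \to \Uu_B\cap (\cx^*)^n$ of Proposition~\ref{prop:covering-mono-polyh}, with $A=\adj B$. Since $\det\phi_A'(z)=\det A\cdot z^{\one A-\one}$ (Proposition~\ref{prop-powerrule}) transforms under the $\Gamma$-action by the character $\chi(\gamma):=\gamma^{\one A-\one}$, the appropriate form of Bell's identity reads
\[
\det\phi_A'(p')\,\overline{\det\phi_A'(q')}\,K_{\Uu_B}\bigl(\phi_A(p'),\phi_A(q')\bigr)=\sum_{\gamma\in\Gamma}\chi(\gamma)\,K_\Omega(p',\gamma\odot q'),
\]
and since the Bergman kernels of $\D$ and $\D^*$ coincide, $K_\Omega(p',q')=\prod_j \pi^{-1}(1-p'_j\overline{q'_j})^{-2}$ is fully explicit.

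Setting $t_j=p_j\overline{q_j}$ and $t'_j=p'_j\overline{q'_j}$ (related by the matrix-power identity $(t')^A=t$, which in particular gives $(t')^{\mu A}=t^\mu$ for $\mu\in\Z^{1\times n}$), I would expand each disc factor as a geometric series in $t'_j\overline{\gamma_j}$ and apply the character orthogonality
\[
\sum_{\gamma\in\Gamma}\gamma^\beta=\begin{cases}|\Gamma| & \beta A^{-1}\in\Z^{1\times n},\\ 0 & \text{otherwise,}\end{cases}
\]
a consequence of~\eqref{eq-gammadef2} and~\eqref{eq-absgamma}. The $\gamma$-sum forces the admissible exponents into the form $\beta=\mu A$ with $\mu\in\Z^{1\times n}$ and $\mu A\geq\one$, the $(t')$-powers cancel against the Jacobian factor $(t')^{\one A-\one}$, and the identity $|\Gamma|/(\det A)^2=(\det B)^{-(n-1)}$ delivers the intermediate series
\[
K_{\Uu_B}(p,q)=\frac{1}{\pi^n (\det B)^{n-1}\,t^{\one}}\sum_{\mu\in\Z^{1\times n},\ \mu A\geq\one}\prod_{j=1}^n (\mu A)_j\,t^{\mu}.
\]

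To convert this into the rational form~\eqref{eq-kernel2}, I would run the character sum in reverse. Setting $k:=\det B$, the matrix identity $AB=kI$ inside the matrix-power construction forces every solution of $\omega^A=t$ to satisfy $\omega_j^k=t^{b^j}$, and the constrained lattice sum rewrites as the symmetric average $|\Gamma|^{-1}\sum_{\omega^A=t}\prod_j\omega_j(1-\omega_j)^{-2}$ over the $|\Gamma|$ solutions of $\omega^A=t$. The decisive step is the identity $(1-\omega_j^k)^2/(1-\omega_j)^2=\sum_r \dd_k(r)\,\omega_j^r$, which is precisely the definition~\eqref{eq-Dk-gen} of $\dd_k$; combined with $\omega_j^k=t^{b^j}$ it recasts each factor as $\omega_j(1-\omega_j)^{-2}=\bigl(\sum_r\dd_k(r)\omega_j^{r+1}\bigr)(1-t^{b^j})^{-2}$, turning the numerator into a polynomial in the $\omega_j$'s divided by $\prod_j(1-t^{b^j})^2$. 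A final character orthogonality $\sum_{\omega^A=t}\omega^{r+\one}=|\Gamma|\,t^\mu$ when $r+\one=\mu A$ (and zero otherwise) collapses the $\omega$-sum to a polynomial in $t$; absorbing the monomial $t^{2\one B_-}$ needed to match the stated denominator $\prod_j(t^{(b_-)^j}-t^{(b_+)^j})^2=t^{2\one B_-}\prod_j(1-t^{b^j})^2$ by the reindexing $\nu=\mu+2\one B_--\one$ produces exactly the coefficients $C_B(\nu)$ in~\eqref{eq-CB}.

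The support bounds~\eqref{eq-nujbounds} follow at once from $\dd_k$ being supported on $[0,2k-2]$: each factor of $C_B(\nu)$ requires $(\nu-2\one B_-+\one)[\adj B]_j\in[1,2k-1]$, and right-multiplying this system by $B/k$ (with the ceiling absorbing the division) yields~\eqref{eq-xij} and the componentwise constraints. The main obstacle will be the canonicality assertion --- that the polynomial numerator and denominator share no common factor in $\cx[t_1,\ldots,t_n]$. Rationality of $K_{\Uu_B}$ is already known from~\cite{summer20}, and the $\delta(z)^{-2}$ boundary behavior of the Bergman kernel near smooth Levi-flat pieces of $\partial \Uu_B$ forces each factor $t^{(b_-)^j}-t^{(b_+)^j}$ to appear in the reduced denominator with its full multiplicity; I would therefore clinch coprimality by exhibiting, for each $j$, an explicit lattice point on $\{t^{b^j}=1\}$ at which $C_B$ is nonzero, ruling out any cancellation.
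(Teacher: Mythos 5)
Your proposal does not address the statement you were asked to prove. The target is Proposition~\ref{prop:covering-mono-polyh}: that $B$ is invertible with $B^{-1}\succeq 0$, that a product domain $\Omega=U_1\times\dots\times U_n$ of discs and punctured discs exists on which $\phi_A$ (with $A=\adj B$) restricts to a proper map onto $\Uu_B$, and that away from the coordinate hyperplanes this is a regular covering with deck group $\Gamma$. Your very first sentence \emph{invokes} this proposition as an input (``apply Bell's transformation formula to the proper covering $\phi_A$ \dots of Proposition~\ref{prop:covering-mono-polyh}''), and everything that follows is a derivation of the kernel formula \eqref{eq-kernel2} --- that is, a sketch of the paper's \emph{main theorem}, not of the covering-space proposition. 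As an argument for the stated proposition it is therefore circular, and none of its actual content (invertibility of $B$, nonnegativity of $B^{-1}$, construction of $\Omega$, properness, the covering property, the identification of the deck group) is established anywhere in what you wrote.

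For the record, the paper itself does not reprove this proposition; it cites \cite[Proposition~3.2]{summer20} and \cite[Theorem~3.12]{summer20}. A self-contained proof would go roughly as follows: boundedness of $\Uu_B$ forces $\det B\neq 0$ and, via the log-absolute image $\{x\in\rl^n: Bx\prec 0\}$ being contained in a translate of the negative orthant, forces $B^{-1}\succeq 0$, hence $A=\adj B\in\N^{n\times n}$ after arranging $\det B>0$. Since $b^jA=\det B\cdot e_j^T$, one gets $\abs{\phi_A(z)^{b^j}}=\abs{z_j}^{\det B}$, so $\phi_A$ carries the polydisc (with punctures dictated by which columns of $A$ are needed to avoid ill-defined negative powers) onto $\Uu_B$, properness follows from this norm identity, and the covering and deck-group statements are Proposition~\ref{prop-phiA} restricted to $\Omega\cap(\cx^*)^{n}$. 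None of this appears in your proposal, so it must be counted as a complete gap relative to the assigned statement, independent of whether your Bell's-formula computation would succeed as a proof of the main theorem.
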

\section{Proof of the main theorem and  formula \texorpdfstring{\eqref{eq-kernel2}}{eqkernel2}}
\subsection{Application of Bell's law}
The following easy-to-verify formulas  will be used without comment: if $z$ is an $n\times 1$ column vector, $\alpha$ is a $1\times n$ row vector, and $P$ and $Q$ are $n\times n$ matrices, we have $(z^P)^\alpha= z^{\alpha P}$ and $(z^P)^Q=z^{QP}$ provided all quantities are well-defined.

We apply Bell's transformation law for the Bergman kernel
under a proper holomorphic map (see \cite{belltransactions}) to the monomial map $\phi_A$ given in part (2) of Proposition~\ref{prop:covering-mono-polyh}. Notice also that 
\begin{equation}
\label{ eq-detA}
\det A = \det(\det B\cdot B^{-1})=(\det B)^n \det (B^{-1})=(\det B)^{n-1}>0.
\end{equation}
% By \cite{summer20}, the rational self-map of $\cx^{n\times 1}$ given by $z\mapsto z^A$ restricts to a proper holomorphic map
    % \[ \phi: \Omega_1\to \Uu_B, \quad \phi(z)=z^A \text{ for } z\in \Omega_1,\]
    Since $\Omega\subset\cx^{n\times 1}$ is a product of $n$ planar domains, each of which is either the unit disc or the punctured 
    unit disc, and the Bergman kernel of a domain remains unchanged on the removal of an analytic set,  we see that $K_{\Omega}=K_{\D^n}$.  Therefore, we have by Bell's transformation law (with $z\in \Omega$, $q\in \Uu_B$,  each not in the branching loci):
    \begin{equation}
    \label{eq-bell1}
     \det\phi_A'(z)\cdot K_{\Uu_B}(\phi_A(z), q)=\sum_{j=1}^{\det A} K_{\D^n}(z, \Phi_j(q))\cdot \ol{\det \Phi_j'(q)},\quad  \end{equation}
    where  $\{\Phi_j\}_{j=1}^{\det A}$ are the locally defined branches of the inverse to $\phi_A$, of which there are $\abs{\Gamma}=\det A$. Let $C=A^{-1}\in \Q^{n\times n}$.
    For each $1\leq j, k \leq n$, fix a local branch of $q=(q_1,\dots, q_n)\mapsto (q_k)^{c^j_k}$ near each point of $\Uu_B$. This gives us a local branch of 
\begin{equation}
    \label{eq-qAinv}
    q \mapsto q^{A^{-1}}
\end{equation}
near each point of $\Uu_B$. Since we have a regular covering map with deck group $\Gamma$, all the local branches of its inverse are given by 
\begin{equation}
    \label{eq-Phigamma}
    \Phi_\gamma(q)= q^{A^{-1}}\odot \gamma, \quad \gamma\in \Gamma.
\end{equation}
%where $\Gamma\subset \T^n$ is the deck transformation group  \eqref{eq-gammadef1}.
Now thanks to Proposition~\ref{prop-powerrule}, we have $\det \phi'_A(z)= \det A \cdot z^{\one A -\one},$  and
% \begin{equation}
%     \label{eq-detphi'}
    
% \end{equation}
% and
% Prove equation 3.10!
\begin{equation}
    \label{eq-detPhi'}
    \det \Phi_\gamma'(q)= \det A^{-1}\cdot q^{\one A^{-1}-\one}\cdot \gamma^{\one},
\end{equation}
where $q^{\one A^{-1}}$ is defined to be $(q^{A^{-1}})^{\one}$ using the branch \eqref{eq-qAinv}.
Inserting these expressions, \eqref{eq-bell1} becomes
\begin{equation}
    \label{eq-bell2} \det A \cdot z^{\one A -\one}\cdot K_{\Uu_B}(z^A, q)=\sum_{\gamma\in \Gamma} K_{\D^n}(z,q^{A^{-1}}\odot \gamma)\cdot \det A^{-1}\cdot \ol{q}^{\one A^{-1}-\one}\cdot \ol{\gamma}^{\one}.\end{equation}
Therefore,
\begin{equation}
    \label{eq-bell3}  K_{\Uu_B}(z^A, q)=\frac{1}{(\det A)^2}\cdot \frac{\ol{q}^{\one A^{-1}-\one}}{z^{\one A -\one}}\sum_{\gamma\in \Gamma} \ol{\gamma}^{\one}\cdot K_{\D^n}(z,q^{A^{-1}}\odot \gamma).\end{equation}
We introduce a change of variables 
 $z=p^{A^{-1}},$ where we use the same branch of the $A^{-1}$-th power as in \eqref{eq-qAinv}. Then $z^A=p$. Further, recalling that 
$t=(p_1\ol{q_1},\dots, p_n\ol{q_n})=p \odot \ol{q}$, we have
\[ \frac{\ol{q}^{\one A^{-1}-\one}}{z^{\one A -\one}}= \frac{\ol{q}^{\one A^{-1}}\cdot \ol{q}^{-\one}}{p^\one\cdot p^{(-\one A^{-1}})}= t^{\one A^{-1}-\one}.\]
% Since $\Omega_1, \Uu_B$ are Reinhardt domains  there are functions $k_1, k_2$ such that 
% % Cite this (and a bunch of other things too...)
% \[ K_{\Omega_1}(z,w)=k_1(z\odot \ol{w}), \quad K_{\Uu_B}(p,q)= k_2(p\odot \ol{q}). \]
Since $\Uu_B$ is a Reinhardt domain,  there is a function $k_2$ such that 
\[ K_{\Uu_B}(p,q)=k_2(p_1\ol{q_1},\dots,p_n\ol{q_n})= k_2(p\odot \ol{q}).\]
We can also write
\[ K_{\D^n}(z,w)=\frac{1}{\pi^n}\cdot \prod_{j=1}^n\frac{1}{(1-z_j\ol{w_j})^2}=k_1(z_1\ol{w_1},\dots, k_n\ol{w_n})=k_1(z\odot \ol{w}), \]
where  $\displaystyle{ k_1(\tau)= \frac{1}{\pi^n}\cdot\prod_{j=1}^n \frac{1}{(1-\tau_j)^2}.}$
Since
\begin{align*}
    z\odot \ol{q^{A^{-1}}\odot \gamma}&=p^{A^{-1}}\odot \ol{q^{A^{-1}}\odot \gamma}= t^{A^{-1}} \odot \ol{\gamma}\\&= (\ol{\gamma_1}t^{c^1},\dots, \ol{\gamma_n}t^{c^n})^T,\quad C=A^{-1},
\end{align*}
in terms of $k_1$ and $k_2$, we can rewrite formula \eqref{eq-bell3} as
\begin{align}
    k_2(t)&=\frac{t^{\one A^{-1}-\one}}{(\det A)^2}\cdot \sum_{\gamma\in \Gamma} \ol{\gamma}^{\one}\cdot k_1(t^{A^{-1}}\odot \ol{\gamma})\label{eq-bell4}\\
   &=\frac{t^{\one A^{-1}-\one}}{\pi^n (\det{A})^2}\cdot
    \sum_{\gamma\in \Gamma}\ol{\gamma}^\one \prod_{j=1}^n \frac{1}{(1-t^{c^j} \ol{\gamma_j})^2}\nonumber\\
    &=\frac{t^{-\one}}{\pi^n (\det{A})^2}\cdot
    \sum_{\gamma\in \Gamma}\prod_{j=1}^n \frac{\ol{\gamma_j}t^{c^j}}{(1-\ol{\gamma_j}t^{c^j} )^2}\label{eq-new1}\\
     &=\frac{t^{-\one}}{\pi^n (\det{A})^2}\cdot L(t^{A^{-1}})\label{eq-new2},
\end{align}
where for a column vector $E=(E_1,\dots, E_n)^T$ of indeterminates we set
\begin{equation}
    \label{eq-L}
    L(E)=\sum_{\gamma\in \Gamma} \prod_{j=1}^n \frac{\ol{\gamma_j} E_j}{(1-\ol{\gamma_j}E_j)^2}.
\end{equation}

\subsection{The group \texorpdfstring{$\Gamma_j$}{Gammaj}}
Consider for $1\leq j \leq n$ the set
\begin{equation}
    \label{eq-Gammajdef}
    \Gamma_j = \{\gamma_j\in \T: (\gamma_1,\dots, \gamma_n)^T\in \Gamma\},
\end{equation}
i.e., the projection of the group $\Gamma\subset \T^n$ onto the $j$-th factor of $\T^n$. Since such a projection is a group homomorphism, $\Gamma_j$ is a finite subgroup of $\T$ and therefore cyclic. We compute its order:
\begin{lem}\label{lem-Gammaj}
  For each $1\leq j \leq n$
\begin{equation}
    \label{eq-claim1}\abs{\Gamma_j}=\det B.
\end{equation}  
\end{lem}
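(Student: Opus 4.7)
The plan is to use the explicit parameterization \eqref{eq-gammadef2} of the deck group, $\Gamma = \{\exp(2\pi i A^{-1}\nu) : \nu \in \Z^{n\times 1}\}$, and push it through the $j$-th coordinate projection. Since $A = \adj B = \det B \cdot B^{-1}$, we have $A^{-1} = B/\det B$, so the $j$-th entry of $A^{-1}\nu$ is $(b^j \cdot \nu)/\det B$ where $b^j$ is the $j$-th row of $B$. Consequently
\[
\Gamma_j = \left\{ \exp\!\left(\frac{2\pi i}{\det B}\, b^j \cdot \nu\right) : \nu \in \Z^{n\times 1}\right\}.
\]

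The next step is to identify the set of integers $\{b^j \cdot \nu : \nu \in \Z^{n\times 1}\}$. This is precisely the image of the $\Z$-linear map $\nu \mapsto b^j_1 \nu_1 + \cdots + b^j_n \nu_n$, which is the subgroup of $\Z$ generated by $b^j_1, \ldots, b^j_n$, i.e., $\gcd(b^j)\cdot\Z$ by Bezout's identity. Invoking hypothesis \eqref{eq-gcdbj1}, this subgroup is all of $\Z$. Therefore
\[
\Gamma_j = \left\{\exp\!\left(\frac{2\pi i\, k}{\det B}\right) : k \in \Z\right\},
\]
which is the cyclic group of $(\det B)$-th roots of unity, of order $\det B$. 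This yields \eqref{eq-claim1}.

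There is essentially no obstacle here — the proof reduces to Bezout's identity once one unwinds the definitions. The only point deserving care is justifying that \eqref{eq-gcdbj1} is used in an essential way: without the coprimality of the entries of each row, one would only get $|\Gamma_j| = \det B / \gcd(b^j)$ generically, which explains why the normalization \eqref{eq-gcdbj1} was imposed. One should also briefly note that $\det B > 0$ from \eqref{eq-detb1} ensures $\det B$ is literally the order (and not its absolute value), so the formula \eqref{eq-claim1} is stated without absolute value signs.
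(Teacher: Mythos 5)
Your proof is correct, and the overall strategy (parameterize $\Gamma$ via \eqref{eq-gammadef2}, project to the $j$-th coordinate, reduce to an ideal of $\Z$, invoke Bezout together with \eqref{eq-gcdbj1}) is the same as the paper's. The one place where you diverge is the middle computation: you observe directly that $A^{-1} = B/\det B$ (immediate from $A = \adj B = \det B \cdot B^{-1}$), so the $j$-th coordinate projection of $\Gamma$ is generated by $\exp(2\pi i\, b^j\nu / \det B)$. The paper instead applies Cramer's rule to $A$ itself, writing $c^j = [\adj A]^j / \det A$, and must then establish the identity $\adj A = \adj(\adj B) = (\det B)^{n-2} B$ and compute $\gcd([\adj A]^j) = (\det B)^{n-2}$ before the hypothesis $\gcd(b^j)=1$ can be applied. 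Your shortcut bypasses the adjugate-of-the-adjugate formula entirely and makes the role of \eqref{eq-gcdbj1} transparent at once; it is the cleaner of the two computations, though the underlying idea is identical.
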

\begin{proof}
    
Writing $C=A^{-1}\in \Q^{n\times n}$ we can rewrite \eqref{eq-gammadef2}
as
\begin{equation*}
\Gamma = \left\{ (e^{2\pi i c^1 \nu}, \dots, e^{2\pi i c^n \nu})^T, \nu \in \Z^{n\times 1}  \right\}.  \end{equation*}
Therefore,  denoting by $[\adj A]^j$ the $j$-th row of the adjugate of $A$, that
\begin{align*}
    \Gamma_j&=\{e^{2\pi i c^j \nu}\in \T:\nu\in \Z^{n\times 1} \} \\
    &= \left\{\exp\left(\frac{2\pi i }{\det A}\cdot  [\adj A]^j\nu\right)\in \T:\nu\in \Z^{n\times 1} \right\}\quad \text{by Cramer's rule}\\
    &= \left\{\exp\left({\frac{2\pi i}{\det A}}\cdot \gcd([\adj A]^j)\cdot  k\right) \in \T:k\in \Z \right\}.
\end{align*}
It follows  that 
the group $\Gamma_j$ is a cyclic group 
of order $\displaystyle{ \frac{{\det A}}{\gcd([\adj A]^j)}}$.
Since 
\begin{align}
    \adj A&=\adj (\adj B)= \det (\adj B)\cdot (\adj B)^{-1}\nonumber= (\det B)^{n-1} (\det B \cdot B^{-1})^{-1}\nonumber\\
    &= (\det B)^{n-2}\cdot B,\label{eq-adjA}
\end{align}
we have
    \[ \gcd([\adj A]^j)=\gcd((\det B)^{n-2}\cdot b^j)=(\det B)^{n-2} \gcd(b^j)=(\det B)^{n-2},\]
    where we use the condition \eqref{eq-gcdbj1}. Therefore,
    \[\abs{\Gamma_j}= \frac{\det A}{\gcd([\adj A]^j)}= \frac{(\det B)^{n-1}}{(\det B)^{n-2}}= \det B.\]\end{proof}

    %Consequently,  for $\alpha\in \Gamma_j$ we have $\alpha^{\det B}=1$. 
    % write
    %     \[\frac{1}{1-\ol{\gamma_j}E_j} = \frac{\dfrac{1-E_j^{\det B}}{1-\ol{\gamma_j}E_j}}{1-E_j^{\det B}}.  \]
   % \subsection{Simplication of \texorpdfstring{$k_2$}{k2}}
    \subsection{Simplification of \texorpdfstring{$L$}{L}}
    We now turn our attention to the function $L$ of \eqref{eq-new2} and \eqref{eq-L}.
    For each $1\leq j\leq n$, dividing and multiplying by $(1-E_j^{\det B})^2$, we write
    \begin{align}
   \frac{\ol{\gamma_j}E_j}{(1-\ol{\gamma_j}E_j)^2}&= 
   \frac{\ol{\gamma_j}E_j}{(1-E_j^{\det B})^2}\cdot\dfrac{(1-E_j^{\det B})^2}{(1-\ol{\gamma_j}E_j)^2}\nonumber\\
   &=\frac{1}{(1-E_j^{\det B})^2}\cdot \ol{\gamma_j}E_j\sum_{r_j\in \Z}\dd_{\det B}(r_j)\cdot(\ol{\gamma_j}E_j)^{r_j}\label{eq-Dk1}\\
   &= \frac{1}{(1-E_j^{\det B})^2}\cdot \sum_{r_j\in \Z} \dd_{\det B}(r_j)\ol{\gamma_j}^{r_j+1}E_j^{r_j+1},\nonumber
\end{align}
where  in \eqref{eq-Dk1} we use \eqref{eq-Dk-gen} with $x=\ol{\gamma_j} E_j$, remembering that $x^{\det B}=E_j^{\det B}$ since $(\ol{\gamma_j})^{\det B}=1$, as $\ol{\gamma_j}\in \Gamma_j$ and the group $\Gamma_j$ has order $\det B$.
 Therefore,
\begin{align}
    L(E)&= \sum_{\gamma\in \Gamma} \prod_{j=1}^n \frac{\ol{\gamma_j} E_j}{(1-\ol{\gamma_j}E_j)^2}\nonumber\\
    &= \sum_{\gamma\in \Gamma} \prod_{j=1}^n\frac{1}{(1-E_j^{\det B})^2}\cdot \sum_{r_j\in \Z} \dd_{\det B}(r_j)\ol{\gamma_j}^{r_j+1}E_j^{r_j+1}\nonumber\\
    &= \frac{\sum_{\gamma\in \Gamma} \prod_{j=1}^n\sum_{r_j\in \Z} \dd_{\det B}(r_j)\ol{\gamma_j}^{r_j+1}E_j^{r_j+1}}{\prod_{j=1}^n(1-E_j^{\det B})^2}\nonumber\\
    &=  \frac{\Lambda(E)}{\Delta(E)}, \label{eq-LLambdaDelta}
\end{align}
   where $\Delta$ and $\Lambda$ are the polynomials in the indeterminates $E_1,\dots, E_n$ given by
   \begin{equation}
       \label{eq-DeltaE}
       \Delta(E)= \prod_{j=1}^n(1-E_j^{\det B})^2
   \end{equation}
        and
        \begin{align}
            \Lambda(E)&=\sum_{\gamma\in \Gamma}\prod_{j=1}^n\sum_{r_j\in \Z} \dd_{\det B}(r_j) \ol{\gamma_j}^{r_j+1}E_j^{r_j+1}\nonumber\\
            &= \sum_{\theta\in \Z^{1\times n}} \left(\sum_{\gamma\in \Gamma} \prod_{j=1}^n \dd_{\det B}(\theta_j-1)\ol{\gamma_j}^{\theta_j} \right) E^\theta\label{eq-LambdaStandard}\\
            &= \sum_{\theta\in \Z^{1\times n}}\Lambda_\theta E^\theta, \label{eq-LambdaE}
        \end{align}
where in \eqref{eq-LambdaStandard} we have gathered all coefficients associated to each monomial $E^\theta$ to put $\Lambda(E)$ in the standard form, and consequently,
\begin{equation}
            \label{eq-lambdatheta}\Lambda_\theta= \sum_{\gamma\in \Gamma}\ol{\gamma}^\theta\prod_{j=1}^n \dd_{\det B}(\theta_j-1). 
        \end{equation}     
To simplify \eqref{eq-LambdaE}, we notice that for $\alpha\in \Gamma$, we have (recall that $\odot$ stands for entrywise multiplication of vectors)
 \begin{align}
            L(\alpha \odot E)&= \sum_{\gamma\in \Gamma} \prod_{j=1}^n \frac{\alpha_j\ol{\gamma_j} E_j}{(1-\alpha_j\ol{\gamma_j}E_j)^2} = \sum_{\beta\in \Gamma} \prod_{j=1}^n \frac{\ol{\beta_j} E_j}{(1-\ol{\beta_j}E_j)^2}\label{eq-thetaj}\\
            &= L(E),\nonumber\end{align}
where in the last expression in \eqref{eq-thetaj}, we set $\beta = \ol{\alpha}\odot \gamma$, i.e., $\beta_j= \ol{\alpha_j}\gamma_j$,  and reindex the sum over the group $\Gamma$.
    Also,
% \prod_{j = 1}^n
% "_" means sub; "^" means sup; "{}" means arg : )
\[    \Delta(\alpha\odot E) = \prod_{j=1}^n (1-(\alpha_jE_j)^{\det B})^2 
    = \prod_{j=1}^n (1-\alpha_j^{\det B} \cdot E_j^{\det B})^2 
    = \prod_{j=1}^n (1-E_j^{\det B})^2 
    = \Delta(E).
\]
Since $\Lambda(E)=L(E)\cdot \Delta(E)$ it follows that for each $\alpha\in \Gamma$,
\[ \Lambda(\alpha\odot E)= L(\alpha\odot E)\cdot \Delta(\alpha\odot E)= \Lambda(E). \]
Using the representation \eqref{eq-LambdaE} of $\Lambda(E)$, this is equivalent to the fact that for each $\alpha\in \Gamma$,
\[ \sum_{\theta\in \Z^{1\times n}} \Lambda_\theta\alpha^\theta E^\theta = \sum_{\theta\in \Z^{1\times n}} \Lambda_\theta E^\theta, \quad \text{ i.e.,} \quad   \sum_{\theta\in \Z^{1\times n}} \Lambda_\theta\cdot(\alpha^\theta-1) E^\theta =0.\]
Therefore, for  $\theta\in \Z^{1\times n}$, we can have $\Lambda_\theta\not=0$ only if 
$\alpha^\theta=1$ for each $\alpha\in \Gamma$, i.e., using the representation \eqref{eq-gammadef2} for the group $\Gamma$, for each $\nu\in \Z^{n\times 1}$, we have
\[1= \left(\exp(2\pi i A^{-1}\nu) \right)^\theta= e^{2\pi i \theta A^{-1}\nu},\]
which is to say that 
\begin{equation}
    \label{eq-nu}
    \theta A^{-1}\nu \in \Z \quad \text{ for each } \nu \in \Z^{n\times 1}.
\end{equation}
We claim that \eqref{eq-nu} holds if and only if 
\begin{equation}
    \label{eq-m}
    \theta=mA \quad\text{for an } m\in \Z^{1\times n}.
\end{equation}
Indeed, if $\theta=mA$ for an integer row vector $m$, then for each integer column vector 
$\nu$:
\[ \theta A^{-1}\nu= (mA)A^{-1}\nu= m\nu\in \Z.\]
Conversely, suppose that for each $ \nu \in \Z^{n\times 1}$, we have $\theta A^{-1}\nu\in \Z$.
For $1\leq j \leq n$, let $e_j$ denote the $j$-th standard basis vector in $\Z^{n\times 1}$, i.e., $e_j$ is a column vector with $n$ entries, of which the $j$-th entry is 1 and the others are zeroes, and set $m_j=\theta A^{-1}e_j,$ so that $m_j\in \Z$ by hypothesis. But then $m_j$ is the $j$-th entry of the row vector $\theta A^{-1}$ and therefore this vector is in $\Z^{1\times n}.$ Setting $m=\theta A^{-1}$, we have $\theta= mA,$ as needed.

Therefore, $\Lambda_\theta\not=0$ for a $\theta\in \Z^{1\times n}$ if and only if \eqref{eq-m} holds, and consequently, the expression \eqref{eq-LambdaE} simplifies to
\begin{equation}
    \label{eq-LambdaEsimp}
   \Lambda(E)= \sum_{\substack{\theta=mA\\m\in \Z^{1\times n}}}\Lambda_\theta E^\theta = \sum_{m\in \Z^{1\times n}} \Lambda_{mA}E^{mA}=  \sum_{m\in \Z^{1\times n}} \Lambda_{mA}\cdot(E^{A})^m. 
\end{equation}
Using the representation \eqref{eq-lambdatheta} of $\Lambda_\theta$, we see that
\begin{align}
  \Lambda_{mA}&=\sum_{\gamma\in \Gamma} \ol{\gamma}^{mA} \prod_{j=1}^n \dd_{\det B}(ma_j-1)
  \label{eq-lma1}\\
 &= \sum_{\gamma\in \Gamma} (\ol{\gamma}^{A})^m \prod_{j=1}^n \dd_{\det B}(ma_j-1)
 \nonumber\\
 &= \sum_{\gamma\in \Gamma} (\one^T)^m \prod_{j=1}^n \dd_{\det B}(ma_j-1)
 \label{eq-lma2}\\
 &= \abs{\Gamma} \prod_{j=1}^n \dd_{\det B}(ma_j-1)
 \nonumber\\
&= {\det A}\cdot \prod_{j=1}^n \dd_{\det B}(ma_j-1),\label{eq-lma3}
\end{align}
where in \eqref{eq-lma1}, we use the fact that the $j$-th entry of the row vector
$mA\in \Z^{1\times n}$ is $ma_j\in \Z$, where $a_j$ denotes the $j$-th column of the matrix  $A=\adj B$. In \eqref{eq-lma2} we use the characterization \eqref{eq-gammadef1} of the group $\Gamma$, and finally, in \eqref{eq-lma3} we use the fact that $\Gamma$ has $\abs{\det A}$ elements. Therefore, using \eqref{eq-LambdaEsimp}, we have
\begin{align}
\Lambda(t^{A^{-1}})&=\sum_{m\in \Z^{1\times n}}\Lambda_{mA}\cdot((t^{A^{-1}})^A)^m \nonumber\\
&={\det A}\cdot\sum_{m\in \Z^{1\times n}} \prod_{j=1}^n \dd_{\det B}(ma_j-1) t^m.\label{eq-LambdatAinv}
\end{align}
We also have, using \eqref{eq-DeltaE},
$\displaystyle{ \Delta(t^{A^{-1}})= \prod_{j=1}^n\left(1- (t^{c^j})^{\det B} \right)^2= \prod_{j=1}^n\left(1- t^{\det B\cdot c^j} \right)^2,}$
where $c^j$ denotes the $j$-th row of the matrix $C=A^{-1}\in \Q^{n\times n}$. Notice that
\[ C=A^{-1}=(\adj B)^{-1}=(\det B\cdot B^{-1})^{-1}=\frac{1}{\det B}\cdot B,\]
so $\displaystyle{ \det B\cdot c^j=   \det B\cdot \frac{1}{\det B}\cdot b^j= b^j.}$
Therefore, we obtain
\begin{equation}
    \label{eq-DeltatAinv}
     \Delta(t^{A^{-1}})= \prod_{j=1}^n\left(1- t^{{b^j}} \right)^2.
\end{equation}
\subsection{An intermediate expression for the kernel}
Since by definition \eqref{eq-LLambdaDelta} we have $L= \frac{\Lambda}{\Delta},$ using 
the representations \eqref{eq-LambdatAinv} and \eqref{eq-DeltatAinv} in \eqref{eq-new2}, we obtain
\begin{align*}
    k_2(t)&=\frac{t^{-\one}}{\pi^n (\det{A})^2}\cdot L(t^{A^{-1}})= \frac{t^{-\one}}{\pi^n (\det{A})^2}\frac{\Lambda(t^{A^{-1}})}{\Delta(t^{A^{-1}})}\\
    &= \frac{t^{-\one}}{\pi^n (\det{A})^2}\cdot \frac{{\det A}\cdot\sum_{m\in \Z^{1\times n}} \prod_{j=1}^n \dd_{\det B}(ma_j-1) t^m}{\prod_{j=1}^n\left(1- t^{{b^j}} \right)^2}\\
    &= \frac{1}{\pi^n\cdot {\det{A}}}\cdot\frac{\sum_{m\in \Z^{1\times n}} \prod_{j=1}^n \dd_{\det B}(ma_j-1) t^{m-\one}}{\prod_{j=1}^n\left(1- t^{{b^j}} \right)^2},
\end{align*}
which taking into account \eqref{ eq-detA}, shows that
\begin{equation}
    K_{\Uu_B}(p,q)= \frac{1}{\pi^n\cdot {(\det B)^{n-1}}}\cdot \frac{P(t)}{Q(t)}, \label{eq-kernel1}
    \end{equation}
    where, recalling that $A=\adj B$,
    \begin{equation}\label{eq-P}
        P(t)= \sum\limits_{m\in \Z^{1\times n}}\left(\prod\limits_{j=1}^n \dd_{\det B}\left(m\,[\adj B]_j-1\right)\right)t^{m-\one},
    \end{equation}
    and
    \begin{equation}
        \label{eq-Q}
        Q(t)=\prod\limits_{j=1}^n (1-t^{b^j})^2.
\end{equation}
    Notice that $Q$ is a Laurent polynomial in the $n$ variables $t_1,\dots, t_n$ with integer coefficients and therefore $Q$  a rational function. Also, $P$ is a Laurent series in these $n$ variables. In fact, a more careful book-keeping shows that $P$ is also a Laurent polynomial with integer coefficients.  We will show that  $P/Q$ is in fact a rational function by representing it as the ratio of two polynomials. 
%%%%%%%%%%%%%%%%%%%%%%%%%%%%%%%%%%%%%%%%%%%%%%%%
\subsection{Reduction  to ratio of polynomials}
Recall that $B_+=\max\{B,0\}$, and $B_-=\max\{-B,0\}$, where maxima of matrices are taken entrywise, and consequently, $B=B_+-B_-$. 
   Notice that the quantity $t^{2\one B_{-}}$ has two alternate representations:
   \begin{subequations}
    \begin{align}
        t^{2\one B_{-}}&=t^{2 \sum_{j=1}^n(b_{-})^j}
        =\prod_{j=1}^n t^{2(b_{-})^j}\label{eq-t21-1}\\
        &=  \prod_{j=1}^n t_j^{2 \one(b_{-})_j},\label{eq-t21-2}
    \end{align}
   \end{subequations}
   where in \eqref{eq-t21-2}, we have used that $\one B_{-}=(\one(b_{-})_1, \dots, \one(b_{-})_n).$
We multiply both the numerator and the denominator of \eqref{eq-kernel1}
by the monomial $t^{2\one B_{-}}$. For the denominator, using representation \eqref{eq-t21-1}, we obtain
\begin{align}t^{2\one B_{-}}\cdot Q(t)&=
    t^{2\one B_{-}}\cdot\prod\limits_{j=1}^n (1-t^{b^j})^2= \prod_{j=1}^n t^{2(b_{-})^j}\cdot\prod\limits_{j=1}^n (1-t^{b^j})^2\nonumber= \prod_{j=1}^n (t^{(b_{-})^j} - t^{(b_{-})^j+b^j})^2\nonumber\\
     &= \prod_{j=1}^n (t^{(b_{-})^j} - t^{(b_{+})^j})^2,\label{eq-t21b-deno}
\end{align}
where in the last line we have used the fact that $b^j= (b_{+})^j-(b_{-})^j$. Now multiplying the numerator of \eqref{eq-kernel1} by $t^{2\one B_{-}}$ and using the representation \eqref{eq-t21-2}, we obtain:
\begin{align}t^{2\one B_{-}}\cdot P(t)
    =& t^{2\one B_{-}}\cdot \sum\limits_{m\in \Z^{1\times n}}\left(\prod\limits_{j=1}^n \dd_{\det B}\left(m\,[\adj B]_j-1\right)\right)t^{m-\one}\nonumber\\
    =& \sum\limits_{m\in \Z^{1\times n}}\prod_{j=1}^n t_j^{2 \one(b_{-})_j} \cdot\prod\limits_{j=1}^n \dd_{\det B}\left(m\,[\adj B]_j-1\right)\cdot \prod_{j=1}^n t_j^{m_j-1}\nonumber\\
    =& \sum\limits_{m\in \Z^{1\times n}}\prod_{j=1}^n\dd_{\det B}\left(m\,[\adj B]_j-1\right)t_j^{m_j+2 \one(b_{-})_j-1}.\label{eq-t21b-num}
\end{align}
 In the outer sum of \eqref{eq-t21b-num}, we reindex using a new summation index $\nu =(\nu_1,\dots, \nu_n)\in \Z^{1\times n}$ by setting 
 $ \nu_j= m_j+2 \one(b_{-})_j-1$ for $1\leq j \leq n$,
 or in vector notation,
 \[ \nu= m+2\one B_{-}-\one.\]
 Notice that the mapping from $m$ to $\nu$ is nothing but a translation 
 in the integer lattice $\Z^{1\times n}$, so therefore $\nu$ can be used as an index of summation instead of $m$ in \eqref{eq-t21b-num}. Solving for $m$ we obtain
 \begin{equation}
     \label{eq-mnu}
     m=\nu- 2\one B_{-}+\one,
 \end{equation}
 or, written in terms of the components,
 $ m_j = \nu_j - 2 \one(b_{-})_j+1, \quad 1\leq j \leq n.$
Reindexing, we obtain
\begin{align}
    \eqref{eq-t21b-num}&= 
    \sum\limits_{\nu\in \Z^{1\times n}}\prod_{j=1}^n\dd_{\det B}\left((\nu- 2\one B_{-}+\one)[\adj B]_j-1\right)t^{\nu}\nonumber\\
    &= \sum\limits_{\nu\in \Z^{1\times n}}C_B(\nu)t^\nu,\label{eq-t21b-num2}
\end{align}
with $C_B$ as in \eqref{eq-CB}. Therefore, using  \eqref{eq-t21b-num2} and \eqref{eq-t21b-deno} in \eqref{eq-kernel1}, we see that

\begin{align*}
     K_{\Uu_B}(p,q)&= \frac{1}{\pi^n\cdot {(\det B)^{n-1}}}\cdot \frac{t^{2\one B_{-}}\cdot P(t)}{t^{2\one B_{-}}\cdot Q(t)}\\
     &= \frac{1}{\pi^n\cdot {(\det B)^{n-1}}}\cdot\frac{\sum\limits_{\nu\in \Z^{1\times n}}C_B(\nu)t^\nu}{\prod_{j=1}^n (t^{(b_{-})^j} - t^{(b_{+})^j})^2},
     \end{align*}
     which is the claimed formula \eqref{eq-kernel2}, except the sum in the numerator is over $\nu\in \Z^{1\times n}$ rather than the finite set given by \eqref{eq-nujbounds}.
\subsection{Bounds on \texorpdfstring{$\nu_j$}{nuj}}
     
     To complete the proof, we need 
     to show that if $\nu$ does not satisfy the conditions \eqref{eq-nujbounds}, then we have $C_B(\nu)=0$.  Notice that the quantity 
     $\xi_j$ in \eqref{eq-xij} is an integer greater than or equal to 1, 
     since $\xi_j$ is the ceiling of a positive number.  Therefore, if \eqref{eq-nujbounds} has been established, then from the left inequality we would know that $\nu_j\geq 0$ for each $j$, and, consequently, the sum in the numerator will only have $\nu\in \N^{1\times n}$, i.e., the numerator is a polynomial.

     From  \eqref{eq-Dk}, we see that $\dd_k(r)=0$ for those $r$ which do not satisfy
     $ 0 \leq r \leq 2k-2.$
     Therefore, $\dd_{\det B}\left((\nu- 2\one B_{-}+\one)[\adj B]_j-1\right)=0$ except when we have 
     \begin{equation}
         \label{eq-bound1}
          0\leq (\nu- 2\one B_{-}+\one)a_j-1 \leq 2\det B-2,
     \end{equation}
     using the notation $A=\adj B$ introduced above in \eqref{eq-A}, and $a_j$ being the $j$-th column of $A$.
    From the definition \eqref{eq-CB} of the coefficients $C_B(\nu)$, it follows that $C_B(\nu)=0$ provided $\nu$ does not satisfy \eqref{eq-bound1} for at least one $j$ with $1\leq j \leq n$. 
    To manipulate the system of inequalities \eqref{eq-bound1} in an efficient manner, we use 
    the elementwise inequality notation defined as follows: if $P,Q\in \rl^{m\times n}$ are real matrices or vectors
    of the same size, then 
    \[ P\preceq Q \quad \text{ means } p^j_k \leq q^j_k, \quad 1\leq j\leq m \text{ and } 1 \leq k \leq n.\]
    We will also write $Q\succeq P$ to mean $P \preceq Q$ if convenient. 
    Using this notation, we can say that 
    the set of $\nu$ for which $C_B(\nu)\not=0$ is contained in the set of $\nu\in \Z^{1\times n}$ given by 
    \begin{equation}
  \label{eq-bound2}
  0\preceq (\nu- 2\one B_{-}+\one)A-\one \preceq (2\det B-2)\one.
    \end{equation}
  which can be rearranged to read
  \begin{equation} \label{eq-rearrange}
      \alpha \preceq \nu A  \preceq \beta,
  \end{equation}
       where $\alpha,\beta\in \Z^{1\times n}$ are given by 
       \begin{subequations}
           \begin{equation}\label{eq-alpha}
               \alpha=( 2\one B_{-}-\one)A +\one
           \end{equation}
           and
           \begin{equation}
               \label{eq-beta}
               \beta= (2\det B-1)\one + ( 2\one B_{-}-\one)A.
           \end{equation}
       \end{subequations}
       Notice that since $B_{-}\succeq 0$ and $B_{+}\succeq 0$ by definition, we have from \eqref{eq-rearrange}:
       \begin{subequations}
           \begin{equation}
               \label{eq-b+}
               \alpha B_{+}\preceq \nu A B_{+} \preceq \beta B_{+}
           \end{equation}
           and
       %     \begin{equation}
       %          \label{eq-b-}  \alpha B_{-}\preceq \nu A B_{-} \preceq \beta B_{-}.
       %     \end{equation}
       % Switching signs in \eqref{eq-b-} we obtain
       \begin{equation}
           \label{eq-negb-}
           -\beta B_{-}\preceq - \nu A B_{-}\preceq - \alpha B_{-}.
       \end{equation}\end{subequations}
Adding \eqref{eq-b+} and \eqref{eq-negb-} we obtain
\begin{equation}
    \label{eq-add}
    \alpha B_{+}- \beta B_{-} \preceq \nu A (B_{+}- B_{-})  \preceq \beta B_{+}-\alpha B_{-}.
\end{equation}
To simplify \eqref{eq-add}, first note that since $B=B_{+}- B_{-}$, and 
$A=\adj B = \det B\cdot B^{-1},$ we have for the middle term
\[ \nu A (B_{+}- B_{-})= \nu AB = \nu\cdot(\det B \cdot I)= \det B\cdot \nu.\]
Let us denote $ \abs{B}=B_{+}+B_{-},$
so that the entry at the $j$-th row and $k$-th column of $\abs{B}$ is $\abs{b^j_k}.$
For the leftmost term of \eqref{eq-add},  we have
\begin{align*}
     \alpha B_{+}- \beta B_{-} &= \left( ( 2\one B_{-}-\one)A +\one\right) B_{+}- \left((2\det B-1)\one + ( 2\one B_{-}-\one)A \right)  B_{-}\\
     &=  2\one B_{-}A  B_{+}-\one AB_{+}+\one B_{+}- 2\det B\cdot \one B_{-}+\one B_{-}- 2\one B_{-}AB_{-}+ \one A B_{-}\\
     &=  2\one B_{-}A  (B_{+}-B_{-})-\one A(B_{+}-B_{-})+\one (B_{+}+B_{-})- 2\det B\cdot \one B_{-}\\
     &= 2\one B_{-}A B-\one AB+ \one \abs{B}- 2\det B\cdot \one B_{-}\\
     &= 2\det B\cdot\one B_{-}-\det B\cdot \one + \one \abs{B}- 2\det B\cdot \one B_{-}\\
     &= \one \abs{B} -\det B\cdot \one.
\end{align*}
For the rightmost term in \eqref{eq-add} we have
\begin{align*}
\beta B_{+}-\alpha B_{-}&=\left( (2\det B-1)\one + ( 2\one B_{-}-\one)A\right) B_{+}-\left(( 2\one B_{-}-\one)A +\one \right) B_{-}\\
&=2\det B\cdot\one B_{+}-\one B_{+} + 2\one B_{-}A B_{+}-\one A B_{+}
- 2\one B_{-}A B_{-}+\one AB_{-}-\one B_{-}\\
&=2\det B\cdot\one B_{+}-\one (B_{+}+B_{-}) + 2\one B_{-}A (B_{+}-B_{-})-\one A (B_{+}-B_{-})\\
&=2\det B\cdot\one B_{+}-\one  \abs{B} + 2\one B_{-}A B-\one A B\\
&=2\det B\cdot\one B_{+}-\one  \abs{B}+2\det B\cdot\one B_{-}-\det B\cdot\one\\
&= 2\det B\cdot\one(B_{+}+ B_{-}) -\one  \abs{B}-\det B\cdot\one\\
&=(2\det B-1)\cdot\one \abs{B}-\det B\cdot\one.
\end{align*}
Putting together the three obtained expressions, \eqref{eq-add} becomes
\[ \one \abs{B} -\det B\cdot \one\preceq \det B\cdot \nu \preceq(2\det B-1)\cdot \one\abs{B}-\det B\cdot\one. \]
Since $\det B>0$ by hypothesis, this is equivalent to
\[ -\one + \frac{1}{\det B}\cdot \one \abs{B} \preceq \nu \preceq-\one+ \left( 2 -\frac{1}{\det B}\right)\cdot\one \abs{B}. \]
In terms of  components this takes the form
\begin{equation}\label{eq-bound4}
    -1 + \frac{1}{\det B}\cdot \sum_{k=1}^n \abs{b^k_j} \leq \nu_j \leq - 1 + \left( 2 -\frac{1}{\det B}\right)\cdot \sum_{k=1}^n \abs{b^k_j}, \quad 1\leq j \leq n. 
\end{equation}
Since  $\nu_j$ is an integer, we can replace the leftmost member by its ceiling, and the rightmost member by its floor to obtain valid inequalities in terms of integers. Recalling the definition \eqref{eq-xij} of $\xi_j$, notice that the ceiling of the leftmost member is
\begin{subequations}
    \begin{equation}
    \label{eq-left}  \left\lceil -1 + \frac{1}{\det B}\cdot \sum_{k=1}^n \abs{b^k_j}\right\rceil=-1+\xi_j,
\end{equation}
  and the floor of the rightmost member is
\begin{align}
\left\lfloor  - 1 + \left( 2 -\frac{1}{\det B}\right)\cdot \sum_{k=1}^n \abs{b^k_j}\right\rfloor&= -1 +  2  \sum_{k=1}^n \abs{b^k_j}
+ \left \lfloor - \frac{1}{\det B}\cdot \sum_{k=1}^n \abs{b^k_j}\right\rfloor\nonumber\\
&= -1 +  2  \sum_{k=1}^n \abs{b^k_j}- \xi_j,\label{eq-right}
\end{align}\end{subequations}
where we use the fact that $\lfloor -x \rfloor = - \lceil x \rceil.$
Plugging in \eqref{eq-left} and \eqref{eq-right} into \eqref{eq-bound4},
the  inequalities \eqref{eq-nujbounds} follow.

\subsection{Canonicity of the representation} To complete the proof of the main theorem, we will now show that the representation \eqref{eq-kernel2} as the ratio of two polynomials is canonical, in the sense that there is no common irreducible factor of the numerator and denominator. 

To see this, first, recall (see \cite{fu94}) that if $\Omega\subset \cx^n$ is a bounded pseudoconvex 
domain and $p\in b\Omega$ is a boundary point in a neighborhood of which $b\Omega$ is smooth, then there is a neighborhood $U$ of $p$ and a constant $C>0$ such that 
\begin{equation}
    \label{eq-fu}
    K_\Omega(z,z) \geq \frac{C}{\delta(z)^2} \quad\text{ for each } z\in U\cap \Omega,
\end{equation}
where $\delta(z)$ denotes the distance from the point $z$ to $b\Omega$. This is clear in the case $n=1$, by comparing $K_\Omega(z,z)$ with the diagonal Bergman kernel of a disk in $\Omega$ tangent to $b\Omega$, and the general case follows by an inductive argument, using the famous Ohsawa-Takegoshi $L^2$-holomorphic extension theorem in the induction step. 

To complete the proof, we use the following algebraic fact whose proof is postponed to the end of the section:
\begin{lem}\label{lem-irreducible}
    For $1\leq k \leq n$, the polynomial $p_k(t)=t^{(b_-)^k} - t^{( b_+)^k}$ is irreducible in 
    $\cx[t_1,\dots, t_n]$.
\end{lem}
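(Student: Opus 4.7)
The idea is to argue via Newton polytopes. I would first observe that the Newton polytope $N(p_k)\subset \rl^n$, i.e.\ the convex hull of the exponent vectors appearing in $p_k$, is the line segment joining the lattice points $(b_-)^k$ and $(b_+)^k$, with direction vector $b^k=(b_+)^k-(b_-)^k$. The standing assumption $\gcd(b^k)=1$ from \eqref{eq-gcdbj1} makes $b^k$ a primitive lattice vector, so the only integer lattice points on the segment $N(p_k)$ are its two endpoints; equivalently, $N(p_k)$ has lattice length one.

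Now suppose for contradiction that $p_k=fg$ is a factorization in $\cx[t_1,\ldots,t_n]$ with neither factor a unit. The classical additivity of Newton polytopes under multiplication in a polynomial ring over an integral domain yields $N(f)+N(g)=N(p_k)$. A short convex-geometric argument shows that $N(f)$ and $N(g)$ must both be contained in affine lines parallel to $b^k$: project both sides of $N(f)+N(g)=N(p_k)$ onto any hyperplane perpendicular to $b^k$. The projection of the right-hand side is a single point, while the projection of a Minkowski sum equals the sum of the projections, forcing each of the projections of $N(f)$ and $N(g)$ to be a point. Hence $N(f)$ and $N(g)$ are (possibly degenerate) lattice segments along the direction of $b^k$, with nonnegative integer lattice lengths $u$ and $v$ satisfying $u+v=1$.

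Up to relabeling, $v=0$ and so $g$ is a monomial. For $f=p_k/g$ to belong to $\cx[t_1,\ldots,t_n]$, the monomial $g$ must divide each of the two terms $t^{(b_-)^k}$ and $t^{(b_+)^k}$ of $p_k$. However, $(b_-)^k$ and $(b_+)^k$ have disjoint supports by the very construction of $B_\pm$ (since $(b_-)^k_i\cdot(b_+)^k_i=0$ for every $i$), so the greatest common monomial divisor of these two terms is $1$. Therefore $g$ is a nonzero constant, i.e., a unit, contradicting the nontriviality of the factorization. The only genuinely nontrivial step is the convex-geometric lemma ruling out transverse Minkowski summands of a segment, and that is a one-line projection argument; the rest is bookkeeping, so I do not anticipate any significant obstacle.
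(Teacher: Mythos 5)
Your Newton polytope argument is correct and takes a genuinely different route from the paper's proof. The paper identifies $\cx[t_1,\dots,t_n]$ with $R[t_1]$, $R=\cx[t_2,\dots,t_n]$, checks primitivity, and then proves irreducibility of $t_1^{\alpha_1}-P^{-1}Q$ over the fraction field $F=\cx(t_2,\dots,t_n)$ by a root-extension argument (showing $P^{-1}Q$ is not a $d$-th power in $F$ for any $d\mid\alpha_1$, then tracking the constant term of a hypothetical factor in the splitting field $F(c)$). Your approach replaces all of this with Ostrowski's additivity $N(fg)=N(f)+N(g)$: the Newton polytope of a binomial is a segment, the hypothesis $\gcd(b^k)=1$ of \eqref{eq-gcdbj1} makes that segment lattice-primitive, the projection argument forces any Minkowski summand to be a parallel segment, and the lattice lengths must add to $1$. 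The remaining ``monomial factor'' case is killed exactly where the paper's primitivity step is killed, by the disjointness of the supports of $(b_+)^k$ and $(b_-)^k$ (the paper records this as property \eqref{eq-prop3}). Both proofs use the same two inputs --- primitivity of $b^k$ and $(b_+)^k\odot(b_-)^k=0$ --- but your route is cleaner: it avoids singling out one variable, avoids the $\alpha_1=1$ special case, and replaces the field-extension bookkeeping with a one-line convexity observation. The only thing worth making fully explicit in a polished write-up is the step asserting that a segment parallel to a primitive vector $b^k$ with lattice endpoints has integer lattice length, and that lattice length is additive under Minkowski sum of parallel segments; both are immediate but are doing real work.
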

Assuming the lemma for the moment, suppose that the rational function $K_{\Uu_B}$ is \textit{not} in canonical form. Then, there is a $j$ with $1\leq j \leq n$ such that $p_j$ divides the numerator
$\sum_{\nu\in \N^{1\times n}}C_B(\nu)t^{\nu}$ in the ring $\cx[t_1,\dots, t_n]$. We can remove this common factor, leaving us with a denominator containing $p_j$ to at most the first power. Now, let $q\not=0$ be a point on the face $F_j=\{p_j(\abs{z_1}^2,\dots, \abs{z_n}^2)=0\}\subset b\Uu_B$ which does not belong to any other face $F_k, k\not=j$. Notice that the polynomial function $r(z)= p_j(\abs{z_1}^2,\dots, \abs{z_n}^2)$ is a defining function of the domain $\Uu_B$ near the smooth boundary point $q$, and so, since $p_j(t)$ occurs in the denominator at most to the first power,  from \eqref{eq-kernel2}, there is a neighborhood $V$ of $q$
and a constant $C'>0$ such that 
  \begin{equation}
      \label{eq-bound3} K_{\Uu_B}(z,z) \leq \frac{C'}{r(z)},\quad  z\in V\cap \Uu_B.
  \end{equation}
However, shrinking $V$ if needed, we have that $r$ is comparable to $\delta$, the distance to the boundary, so using \eqref{eq-fu} we have that there is a constant $C''>0$ such that 
\[  K_{\Uu_B}(z,z)\geq \frac{C''}{r(z)^2},\quad  z\in V\cap \Uu_B,\]
which contradicts \eqref{eq-bound3} for $z$ close enough to $q$, thus showing that $p_j$ is not a factor of the numerator, and therefore the representation \eqref{eq-kernel2} is already in its lowest terms.

\begin{proof}[Proof of Lemma~\ref{lem-irreducible}]
Denote $\alpha=(b_-)^k, \beta=(b_+)^k\in \N^{1\times n}$
so that $b^k=\beta - \alpha$ and  $p_k(t)=t^\alpha-t^\beta= t_1^{\alpha_1}\cdots t_n^{\alpha_n}- 
t_1^{\beta_1}\cdots t_n^{\beta_n}$. Since $\det B\not=0$, each row of $B$ has at least one 
nonzero entry and at least one of $\alpha, \beta$ is nonzero. Therefore, renaming $t_1,\dots, t_n$, we can assume that
$\alpha_1\not=0$. Further, the assumption  \eqref{eq-gcdbj1} means that 
\[ \gcd\{\alpha_1,\dots, \alpha_n, \beta_1,\dots, \beta_n\}=1.\]
Also, by definition of $B_+, B_-$, we have the property
\begin{equation}
    \label{eq-prop3}
    \alpha_j\not=0 \Rightarrow \beta_j=0, \text{ and } \beta_j \not=0\Rightarrow \alpha_j=0.
\end{equation}
Let $F$ denote the rational function field $\cx(t_2,\dots, t_n)$, the field of fractions of the UFD $R=\cx[t_2,\dots, t_n]$.
Identifying $\cx[t_1,\dots,t_n]$ with $R[t_1]$ it follows by Gauss's Lemma
that the polynomial $p_k$ in $R[t_1]$ is irreducible in $R[t_1]$ provided
it is primitive in $R[t_1]$ and irreducible in $F[t_1]$. Notice that since 
$\alpha_1\not =0$, it follows that $p_k$ is a polynomial of degree $\alpha_1 \geq 1$ as an element of either $R[t_1]$ or $F[t_1].$ 

Recall that a polynomial in $R[t_1]$ is primitive if the gcd of its coefficients in $R$ is 1. Since $\alpha_1\not=0$, it follows that $\beta_1=0$   by \eqref{eq-prop3},  and we can write $ p_k(t) = Pt_1^{\alpha_1}-Q$. We can factor the coefficients  into 
irreducibles of $R$ as
$Q= t_2^{\beta_2}\dots t_n^{\beta_n}\in R$  and 
$P=t_2^{\alpha_2}\dots t_n^{\alpha_n}\in R$. Since 
 $t_2,\dots, t_n$ are irreducible in the UFD $R$, it follows from these factorizations that  $\gcd(P,-Q)=1$, and therefore $p_k$ is primitive in $R[t_1]$.

 In $F[t_1]$, we can write $p_k(t)= P(t_1^{\alpha_1}-  P^{-1}\cdot Q)$, so that we only need to show that 
 the polynomial $q(t_1)=t_1^{\alpha_1}-P^{-1}\cdot Q$ is irreducible in $F[t_1]$. This is obvious if $\alpha_1=1$, so assume that $\alpha_1\geq 2$.
 We  now claim that $P^{-1}\cdot Q\in F$
 is not a $d$-th power in $F$ for any $d\geq 2$ that divides $\alpha_1$.
 Indeed,  we have a factorization of $P^{-1}\cdot Q$
 into irreducible elements of $R$ 
 \[P^{-1}\cdot Q= t_2^{\beta_2-\alpha_2}\dots t_n^{\beta_n-\alpha_n} =\prod_{j=2}^n t_j^{b^k_j},\]
and since $\gcd(b^k)=1$, it follows that if $d\geq 2$ is 
a divisor of $\alpha_1= b^k_1$, then there is at least one $2\leq j \leq n$ such that $d$ does not divide $b^k_j$. This means that $P^{-1}\cdot Q$ is not a $d$-th power in $F$, since one of its irreducible factors $t_j$ is raised to a power $b^k_j$ not divisible by $d$.

 Now let $c$ be an $\alpha_1$-th root of $P^{-1}\cdot Q$ in an extension field.  Since by the above claim, $P^{-1}\cdot Q$ is not an $\alpha_1$-th root, it follows that $c\not \in F$. More generally, 
  we have
 \begin{equation}
     \label{eq-cs} c^s \not \in F, \quad 1 \leq s < \alpha_1.
 \end{equation}
We already know this for $s=1$, so consider the smallest $s$ with $2\leq s <\alpha_1$ for which $c^s\in F$. We claim that $s$ divides $\alpha_1$. 
 Let $s'$ be the remainder in dividing $\alpha_1$ by $s$, so that $c^{s'}=(c^{s})^{-T}\cdot c^{\alpha_1}\in F$ for some positive integer $T$, which contradicts the minimality of $s$, since $s'\not=1$. But then $c^s$ is an $\frac{\alpha_1}{s}$-th root of $P^{-1}\cdot Q$, contradicting the fact that  $P^{-1}\cdot Q$ is not a $d$-th power in $F$ for each divisor $d$ of $\alpha_1$.
  
 In the ring $F(c)[t_1]$, the polynomial $q(t_1)$ splits completely as
 $\displaystyle{t_1^{\alpha_1}-P^{-1}\cdot Q= \prod_{j=1}^{\alpha_1}(t_1-c\,\omega^j)}$,
 where $\omega=e^{\frac{2\pi i}{\alpha_1}}\in \cx$.
If $q(t_1)$ has a nontrivial divisor $A(t_1)$ in $F[t_1]$, there is a subset $\emptyset \not =S\subsetneq \{1,\dots, n\}$ and a $\lambda\in F$ such that   $A(t_1)= \lambda\prod_{j\in S}(t-c\cdot\omega^j)$. Looking at the constant term of $A$ we see that 
\[ \lambda \prod_{j\in S}c\omega^j=\lambda \cdot c^{\abs{S}}\cdot \omega^{\sum_{j \in S}j}\in F.\] Since $\lambda\in F$ and $\omega^{\sum_{j \in S}j}\in \cx\subset F$, it follows that $c^{\abs{S}}\in F$. But this contradicts \eqref{eq-cs}, showing that $q(t_1)$ is irreducible in $F[t_1]$
and completing the proof of the lemma.

\end{proof}

\section{Special cases}\label{sec-special}
\subsection{Domains birational to the unit polydisc} When $\det B=1$, the map $\phi_A:\Omega\to \Uu_B $ of Proposition~\ref{prop:covering-mono-polyh} is a biholomorphism, and the formula \eqref{eq-kernel2} takes a simple form:
\begin{prop}
    \label{prop-new-biholo}
      Let $B\in \Z^{n\times n}$ be such that $\det B=1$. Then 
   \begin{equation}\label{eq-izzy}
     K_{\Uu_B}(z,w) = \frac{1}{\pi^n} \frac{t^{\one \abs{B}-\one}}{\prod\limits_{j=1}^{n} (t^{(b_-)^j}-t^{(b_+)^j})^2}, \quad \text{ with } \abs{B}=B_{+}+B_{-}.
    \end{equation}
\end{prop}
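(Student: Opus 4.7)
The plan is to derive \eqref{eq-izzy} as a direct specialization of the main theorem. First I would check that the hypotheses of the main theorem are satisfied: $\det B = 1 > 0$ gives \eqref{eq-detb1}, and expanding $\det B$ along the $j$-th row shows that any common divisor of the entries of $b^j$ must divide $\det B$, hence $\gcd(b^j) = 1$ as in \eqref{eq-gcdbj1}. Thus the main theorem applies. Its prefactor becomes $\frac{1}{\pi^n \cdot 1^{n-1}} = \frac{1}{\pi^n}$, matching \eqref{eq-izzy}, and the denominator of \eqref{eq-kernel2} already agrees verbatim with that of \eqref{eq-izzy}. All that remains is to show that the sum in the numerator collapses to the single monomial $t^{\one|B|-\one}$.

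For this I would use the explicit form of $\dd_k$ in \eqref{eq-Dk} with $k = 1$: the defining range $0 \leq r \leq k-1 = 0$ contains the single point $r = 0$, so $\dd_1(r) = 1$ if $r = 0$ and vanishes otherwise. Consequently, in the product \eqref{eq-CB} every factor vanishes unless
\[ (\nu - 2\one B_- + \one)[\adj B]_j - 1 = 0 \quad \text{for every } 1 \leq j \leq n. \]
Stacking these $n$ scalar equations into a single row-vector identity gives $(\nu - 2\one B_- + \one)\adj B = \one$. Since $\det B = 1$ forces $\adj B = B^{-1}$, right-multiplication by $B$ combined with $B = B_+ - B_-$ yields
\[ \nu = \one B + 2\one B_- - \one = \one(B_+ - B_-) + 2\one B_- - \one = \one(B_+ + B_-) - \one = \one|B| - \one. \]
For this unique surviving $\nu$, the coefficient is $C_B(\nu) = \prod_{j=1}^n \dd_1(0) = 1$, so the numerator of \eqref{eq-kernel2} reduces to the single monomial $t^{\one|B|-\one}$, and \eqref{eq-izzy} follows.

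No step here is a real obstacle: the entire argument is a bookkeeping exercise enabled by the fact that $\dd_1$ is supported at a single point. As a consistency check I would verify that the surviving $\nu$ lies in the range \eqref{eq-nujbounds}: when $\det B = 1$, the quantity $\xi_j = \sum_k |b^k_j|$ in \eqref{eq-xij} is already an integer, so the two endpoints of \eqref{eq-nujbounds} collapse to the common value $\nu_j = \sum_k |b^k_j| - 1$, which is exactly the $j$-th entry of $\one|B| - \one$. This both confirms the computation and explains geometrically why the numerator degenerates to one term precisely when $\phi_A$ is a biholomorphism.
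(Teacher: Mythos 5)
Your proposal is correct, and the overall shape matches the paper's proof — both reduce to showing the numerator collapses to the single monomial $t^{\one|B|-\one}$ and then checking $C_B(\one|B|-\one)=1$. The one genuine difference is how you isolate the surviving $\nu$. The paper invokes the bounds \eqref{eq-nujbounds} from the main theorem, observes that with $\det B=1$ the two endpoints coincide at $\nu_j=-1+\sum_k|b^k_j|$, and so the range pinches to a single point. You instead go back to first principles: since $\dd_1$ is supported only at $r=0$, each factor in \eqref{eq-CB} forces $(\nu-2\one B_-+\one)[\adj B]_j-1=0$, which stacks into $(\nu-2\one B_-+\one)\adj B=\one$, and with $\adj B=B^{-1}$ you solve the linear system directly. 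Your route is cleaner here — it avoids unwinding the elementwise-inequality chain that produced \eqref{eq-nujbounds} in the general case, and it makes the uniqueness of $\nu$ transparent as a linear-algebraic fact rather than a degenerate instance of a numerical estimate. Your route also surfaces two small points the paper leaves implicit: that $\gcd(b^j)=1$ is automatic when $\det B=1$ (so the hypothesis of the main theorem is free), and the consistency check that the surviving $\nu$ sits exactly at the degenerate endpoint of \eqref{eq-nujbounds}, which is precisely what the paper uses as its primary mechanism. Both are nice confirmations.
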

\begin{proof}
When $\det B=1$, the denominator of \eqref{eq-kernel2} and \eqref{eq-izzy} are the same, so we need to show that $ \sum_{\nu\in \N^{1\times n}} C_B(\nu) t^\nu=  t^{\one |B| - \one}.$
We use the bounds \eqref{eq-nujbounds} to find the values of $\nu$ for which $C_B(\nu)\not=0$. Using \eqref{eq-xij}, we have, for each $1\leq j \leq n$, that
$\displaystyle{ \xi_j=\left\lceil\sum\limits_{k=1}^n\abs{b_j^k} \right\rceil= \sum\limits_{k=1}^n\abs{b_j^k},}$
so the inequalities \eqref{eq-nujbounds} become
\[ -1+ \xi_j\leq \nu_j \leq 2 \sum_{k=1}^n \abs{b_j^k}-1 -\sum\limits_{k=1}^n\abs{b_j^k}= -1+ \xi_j, \]
which means that if $C_B(\nu)\not=0$, we have  $\nu_j=-1+ \sum_{k=1}^n\abs{b_j^k}$,
i.e., $\nu= \one \abs{B}-\one.$  Notice that if $\det B=1$, then $\adj B= \det B \cdot B^{-1}= B^{-1}$, so $B[\adj B]_j=e_j$, where $e_j$ is the column vector with zeros everywhere except in the $j$-th spot. Therefore, we have by \eqref{eq-CB}
\begin{align*}
    C_B(\one \abs{B}-\one)&= \prod\limits_{j=1}^n \dd_1\left(\left(\one (B_++B_-)-\one-2\one B_{-}+\one\right)[\adj B]_j-1\right)\\
    &=\prod\limits_{j=1}^n \dd_1\left(\one B[\adj B]_j-1\right)=\prod\limits_{j=1}^n \dd_1\left(\one e_j-1\right)= \prod\limits_{j=1}^n \dd_1(0)=\prod\limits_{j=1}^n 1 =1,
\end{align*}
using \eqref{eq-Dk} to compute $\dd_1(0)$. Therefore
$\displaystyle{  \sum_{\nu\in \N^{1\times n}} C_B(\nu) t^\nu= C_B(\one |B| - \one) t^{\one |B| - \one} = t^{\one |B| - \one}.}$
\end{proof}

\subsection{Almughrabi's formula for \texorpdfstring{$n=2$}{n2}} We will now recapture the following result:
\begin{prop}[see \cite{almughrabi}]\label{prop-rasha} Let $B\in \Z^{2\times 2}$ satisfy  \eqref{eq-detb1} and \eqref{eq-gcdbj1}. Then denoting $t_j=p_j\cdot\ol{q_j}, j=1,2$, we have
    \begin{equation}
        \label{eq-rashakernel}
        K_{\Uu_B}(p,q)=\frac{1}{\pi^2\cdot \det A}\cdot \frac{g(t_1,t_2)}{\left(t_2^{a_2^1}-t_1^{a_2^2}\right)^2\left(t_1^{a_1^2}-t_2^{a_1^1}\right)^2},
    \end{equation}
    where $a^j_k$ indicates the element in the adjugate matrix $A=\adj B$ of $B$  at the $j$-th row and $k$-th column. The numerator $g(t_1,t_2)$ is a polynomial given by
        \begin{equation}
            \label{eq-rashanum}
            g(t_1,t_2)=\sum\limits_{\nu\in \N^{1\times 2}}\dd_{\det A}(\zeta_1(\nu))\dd_{\det A}(\zeta_2(\nu))t^{\nu_1}t^{\nu_2}
        \end{equation}
    where 
    $ \zeta_j(\nu)= a^1_j\nu_1+ a^2_j\nu_2 - 2(a^2_1a^1_j+ a^1_2a^2_j)+(a^1_j+a^2_j-1)$.
   \end{prop}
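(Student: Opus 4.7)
The plan is to specialize the main theorem directly to $n=2$, since essentially no new work is required beyond matching the notation of \eqref{eq-rashakernel} with that of \eqref{eq-kernel2}. First, for $n=2$ we have $(\det B)^{n-1}=\det B$, and since $A=\adj B$ gives $\det A=(\det B)^{n-1}=\det B$, the prefactor $\frac{1}{\pi^n(\det B)^{n-1}}$ of \eqref{eq-kernel2} becomes $\frac{1}{\pi^2\det A}$, and $\dd_{\det B}=\dd_{\det A}$.

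Next I would invoke Proposition~\ref{prop:covering-mono-polyh}(1), which forces every entry of $A=\adj B$ to be nonnegative. Writing out the $2\times 2$ adjugate explicitly, this pins down the sign pattern of $B$: the diagonal entries satisfy $b^1_1=a^2_2\geq 0$ and $b^2_2=a^1_1\geq 0$, while the off-diagonal entries satisfy $b^1_2=-a^1_2\leq 0$ and $b^2_1=-a^2_1\leq 0$. Consequently the decomposition $B=B_+-B_-$ is completely determined, with rows $(b_+)^1=(a^2_2,0)$, $(b_-)^1=(0,a^1_2)$, $(b_+)^2=(0,a^1_1)$, $(b_-)^2=(a^2_1,0)$. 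Substituting into the denominator of \eqref{eq-kernel2} gives
\[
t^{(b_-)^1}-t^{(b_+)^1}=t_2^{a^1_2}-t_1^{a^2_2}, \qquad t^{(b_-)^2}-t^{(b_+)^2}=t_1^{a^2_1}-t_2^{a^1_1},
\]
exactly reproducing the denominator of \eqref{eq-rashakernel}.

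For the numerator I would use the same decomposition to compute $\one B_-=(a^2_1,a^1_2)$, so that $\nu-2\one B_-+\one=(\nu_1-2a^2_1+1,\,\nu_2-2a^1_2+1)$. Writing $[\adj B]_j=(a^1_j,a^2_j)^T$ and expanding the inner product gives
\[
(\nu-2\one B_-+\one)\cdot[\adj B]_j-1=a^1_j\nu_1+a^2_j\nu_2-2(a^2_1 a^1_j+a^1_2 a^2_j)+(a^1_j+a^2_j-1),
\]
which is precisely Almughrabi's $\zeta_j(\nu)$. Plugging this into the definition \eqref{eq-CB} of $C_B(\nu)$ yields $C_B(\nu)=\dd_{\det A}(\zeta_1(\nu))\dd_{\det A}(\zeta_2(\nu))$, matching the numerator in \eqref{eq-rashanum}.

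The only genuine obstacle is notational bookkeeping: one must reconcile the paper's $a^j_k$ with Almughrabi's $a_k^j$ convention, and carefully track which entries of $B$ are forced by the nonnegativity of $\adj B$ to land in $B_+$ versus $B_-$. Once the sign pattern of $B$ is pinned down from Proposition~\ref{prop:covering-mono-polyh}(1), the rest of the identification is a routine substitution into \eqref{eq-kernel2}, and the summation range $\nu\in\N^{1\times 2}$ from the main theorem is inherited without modification.
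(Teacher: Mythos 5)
Your proposal is correct and follows essentially the same route as the paper: specialize \eqref{eq-kernel2} to $n=2$, use Proposition~\ref{prop:covering-mono-polyh}(1) to pin down the sign pattern of $B$ (hence $B_+$, $B_-$), and then substitute into the numerator and denominator. The algebraic identifications you carry out for $\one B_-$, the denominator factors, and $\zeta_j(\nu)$ all match the paper's computation.
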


\begin{proof}
Notice that
$    A=\begin{pmatrix}
    a^1_1 & a^1_2\\a^2_1 & a^2_2
\end{pmatrix} = \adj B = \begin{pmatrix}
    b^2_2 & -b^1_2 \\ - b^2_1 & b^1_1
\end{pmatrix}.
$
% so that
% \begin{equation}\label{eq-adj22}
%     a^1_1 = b^2_2, \quad a^1_2 = -b^1_2, \quad a^2_1 = -b^2_1, \quad a^2_2 = b^1_1
% \end{equation}
% Now suppose that $B\in \Z^{2\times 2}$ is the defining matrix of a two-dimensional monomial polyhedron $\Uu_B$.
By part (1) of Proposition~\ref{prop:covering-mono-polyh}, we have $A=\adj B\succeq 0$, i.e. the entries of $A$ are nonnegative. Consequently, we have from the above that $ b^1_1\geq 0, b^2_2\geq 0,   b^1_2\leq 0$  and  $b^2_1 \leq 0.$ Consequently, we have 
\[ B_+= \begin{pmatrix}
    b^1_1 &0\\ 0 & b^2_2
\end{pmatrix} , \quad B_-= \begin{pmatrix}
     0 & -b^1_2 \\ - b^2_1 & 0
\end{pmatrix}= \begin{pmatrix}
    0 & a^1_2\\a^2_1 &0
\end{pmatrix}.\]

Therefore, we have, for $\nu \in \Z^{1\times 2}$,
\begin{align}
    \nu-2\one B_-+\one&=(\nu_1, \nu_2) - 2 (1,1) \begin{pmatrix}
     0 & a^1_2 \\ a^2_1 & 0
\end{pmatrix} +(1,1)\nonumber\\
&= (\nu_1- 2a^2_1+1, \nu_2-2a^1_2+1)\label{eq-int1}.
\end{align}
Using the above computations, with $n=2$, the formula \eqref{eq-kernel2} becomes
\begin{align}
      K_{\Uu_B}(p,q)&=\frac{1}{\pi^2\cdot {\det B}}\cdot \frac{\sum\limits_{\nu\in \N^{1\times 2}} C_B(\nu)t^\nu}{ (t^{ (b_-)^1}-t^{(b_+)^1})^2 (t^{ (b_-)^2}-t^{(b_+)^2})^2}\nonumber\\
      &= \frac{1}{\pi^2\cdot {\det B}}\cdot \frac{\sum\limits_{\nu\in \N^{1\times 2}} \prod\limits_{j=1}^2\dd_{\det B}(\zeta_j(\nu))t^\nu}{ (t^{ (0, -b^1_2)}-t^{(b^1_1,0)})^2 (t^{ (-b^2_1,0)}-t^{(0, b^2_2)})^2}\nonumber\\
      &= \frac{1}{\pi^2\cdot {\det B}}\cdot \frac{\sum\limits_{\nu\in \N^{1\times 2}} \dd_{\det B}(\zeta_1(\nu))
      \cdot \dd_{\det B}(\zeta_2(\nu)) t_1^{\nu_1}t_2^{\nu_2}}{ \left(t_2^{-b^1_2}-t_1^{b^1_1}\right)^2 \left(t_1^{-b^2_1}-t_2^{b^2_2}\right)^2}\label{eq-kernelforn=2}\\
       &= \frac{1}{\pi^2\cdot {\det A}}\cdot \frac{\sum\limits_{\nu\in \N^{1\times 2}} \dd_{\det A}(\zeta_1(\nu))
      \cdot \dd_{\det A}(\zeta_2(\nu)) t_1^{\nu_1}t_2^{\nu_2}}{ \left(t_2^{a^1_2}-t_1^{a^2_2}\right)^2 \left(t_1^{a^2_1}-t_2^{a^1_1}\right)^2} \label{eq-kernelforn=2new},
\end{align}
where \eqref{eq-kernelforn=2new} is obtained from \eqref{eq-kernelforn=2} using the fact that $\det A= (\det B)^{n-1}=\det B$ for $n=2$, the relation $A=\adj B$, and where, 
using \eqref{eq-CB} and \eqref{eq-int1}, we have
\begin{align*}
    \zeta_j(\nu)&=\left(\nu-2\one B_{-}+\one\right)[\adj B]_j-1=(\nu_1- 2a^2_1+1, \nu_2-2a^1_2+1)\begin{pmatrix}
        a^1_j\\a^2_j
    \end{pmatrix} -1\\
    &=a^1_j\nu_1+ a^2_j\nu_2 - 2(a^2_1a^1_j+ a^1_2a^2_j)+(a^1_j+a^2_j-1).
\end{align*}
\end{proof}

\subsection{Comments on general formulas for \texorpdfstring{$n\geq 3$}{ngeq3}}
Given the matrix $B\in \Z^{2\times 2}$ defining a two-dimensional monomial polyhedron $\Uu_B$,  using part (1) of Proposition~\ref{prop:covering-mono-polyh}, we have
$B_+=
\begin{pmatrix}
    b_1^1 & 0 \\ 0 & b_2^2
\end{pmatrix}$ and 
$B_-=\begin{pmatrix}
    \phantom- 0 & -b^1_2 \\ -b^2_1 & \phantom- 0
\end{pmatrix}$.
So the decomposition into the positive and negative parts of the matrix $B$ is unique, and this leads to the unique expression \eqref{eq-rashakernel} for the Bergman kernel in its canonical form as the ratio of two coprime polynomials. 

If $n\geq 3$, this is not the case. For example, consider the matrices
$$\begin{pmatrix}
    1&0&-1\\0&1&\phantom- 0\\0&0&\phantom- 1
\end{pmatrix} \text{  and  } \begin{pmatrix}
    1&-1& \phantom- 1\\0&\phantom- 1& -1\\0&\phantom- 0& \phantom- 1
\end{pmatrix}.$$
Both of these have inverses with nonnegative entries and therefore define monomial polyhedra. Thus, the Bergman kernels are represented in canonical form by different formulas. However, it is possible to write
the kernels as \emph{rational} functions using the same general formula in terms of the matrix $B$ as we saw in \eqref{eq-kernel1}. The possible sign patterns can be determined using methods described in \cite{johnson}.
\subsection{Symmetries of \texorpdfstring{$\dd$}{dd}} The following lemma will be used below:
\begin{lem} \label{lem-symmetry} The function $\dd_k$ of \eqref{eq-Dk-gen} has the following properties:
\begin{enumerate}
    \item \label{item-sym1} for integers $k\geq 1$ and $r$, we have $\dd_k(r)= \dd_k(2k-2-r).$
    \item for $k_1,k_2$  positive integers and  each integer $r$ \label{item-Dk1k2} we have 
        $$\dd_{k_1k_2}(k_2(r + 1) - 1)= k_2\dd_{k_1}(r).$$
\end{enumerate}
\end{lem}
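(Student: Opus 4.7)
The natural approach is to work directly with the defining generating function \eqref{eq-Dk-gen} rather than with the piecewise formula \eqref{eq-Dk}, as both statements are really facts about the polynomial $\left(\frac{1-x^k}{1-x}\right)^2 = (1+x+\dots+x^{k-1})^2$.

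For part \eqref{item-sym1}, I would observe that the polynomial $q(x)=1+x+\dots+x^{k-1}$ is palindromic of degree $k-1$, that is, $x^{k-1}q(1/x)=q(x)$. Squaring gives $x^{2k-2}q(1/x)^2=q(x)^2$, i.e.
$$\sum_{r\in\Z}\dd_k(r)\,x^{2k-2-r}=\sum_{r\in\Z}\dd_k(r)\,x^r,$$
and reading off coefficients yields $\dd_k(2k-2-r)=\dd_k(r)$. (Alternatively, a direct case check from \eqref{eq-Dk} works, but the palindrome argument is cleaner.)

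For part \eqref{item-Dk1k2}, the key algebraic identity is the telescoping factorization
$$\frac{1-x^{k_1 k_2}}{1-x}=\frac{1-x^{k_1 k_2}}{1-x^{k_2}}\cdot\frac{1-x^{k_2}}{1-x},$$
which, after squaring, gives
$$\left(\frac{1-x^{k_1 k_2}}{1-x}\right)^2=\left(\frac{1-y^{k_1}}{1-y}\right)^2\bigg|_{y=x^{k_2}}\cdot\left(\frac{1-x^{k_2}}{1-x}\right)^2.$$
Expanding all three factors via \eqref{eq-Dk-gen} yields
$$\sum_{r\in\Z}\dd_{k_1 k_2}(r)\,x^r=\left(\sum_{s\in\Z}\dd_{k_1}(s)\,x^{s k_2}\right)\left(\sum_{u\in\Z}\dd_{k_2}(u)\,x^u\right).$$
Extracting the coefficient of $x^{k_2(r+1)-1}$, I get
$$\dd_{k_1 k_2}(k_2(r+1)-1)=\sum_{s\in\Z}\dd_{k_1}(s)\,\dd_{k_2}(k_2(r+1-s)-1).$$

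The main (mild) obstacle is then to show that only one term in this sum is nonzero. Using the support information $\dd_{k_2}(u)=0$ unless $0\le u\le 2k_2-2$, the nonvanishing of the $s$-term requires $1\le k_2(r+1-s)\le 2k_2-1$; since $r+1-s$ is an integer, this forces $r+1-s=1$, i.e. $s=r$. The surviving term is $\dd_{k_1}(r)\cdot \dd_{k_2}(k_2-1)=\dd_{k_1}(r)\cdot k_2$ by \eqref{eq-Dk}, finishing the proof.
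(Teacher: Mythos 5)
Your proof is correct and follows essentially the same route as the paper's: part (1) is the substitution $x\mapsto 1/x$ in the generating function (the palindromy of $1+x+\dots+x^{k-1}$ is just a cleaner phrasing of the same computation), and part (2) uses the same factorization $\frac{1-x^{k_1k_2}}{1-x}=\frac{1-x^{k_1k_2}}{1-x^{k_2}}\cdot\frac{1-x^{k_2}}{1-x}$, followed by coefficient extraction and the support condition for $\dd_{k_2}$ together with $\dd_{k_2}(k_2-1)=k_2$. The only cosmetic difference is that the paper parametrizes the constraint $k_2\lambda_1+\lambda_2=k_2(r+1)-1$ with an auxiliary variable $\xi$, while you sum directly over $s=\lambda_1$; the two are equivalent.
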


\begin{proof} 
Part (1): Using \eqref{eq-Dk-gen}, we have
\begin{align*}
    \sum_{r\in \mathbb{Z}}\dd_k(r)x^r &= \left(\frac{1 - x^k}{1 - x}\right)^2 =\left(\frac{x^k(1-(x^{-1})^{k})}{x(1-x^{-1})}\right)^2\\
    &=x^{2k-2}\cdot \left(\frac{(1-(x^{-1})^{k})}{(1-x^{-1})}\right)^2= x^{2k-2} \sum_{r\in \mathbb{Z}}\dd_k(r)(x^{-1})^r\\
    &=\sum_{r\in \mathbb{Z}}\dd_k(r)x^{2k - 2 - r}= \sum_{\lambda\in\mathbb{Z}}\dd_k(2k - 2 - \lambda)x^{\lambda},
\end{align*}
where the last expression is obtained by reindexing the  sum using $\lambda = 2k - 2 - r$. Upon comparing coefficients of the same degree, we get the result.

Part (2): Again, using \eqref{eq-Dk-gen}, we have
\begin{align*}
    \sum_{\mu \in \mathbb{Z}}\dd_{k_1k_2}(\mu)x^{\mu} &= \left(\frac{1 - x^{k_1k_2}}{1 - x}\right)^2 
    = \left(\frac{1 - (x^{k_2})^{k_1}}{1 - x^{k_2}}\right)^2 \left(\frac{1 - x^{k_2}}{1 - x}\right)^2 \\
    &= \sum_{\lambda_1 \in \mathbb{Z}}\dd_{k_1}(\lambda_1)x^{k_2{\lambda_1}} \cdot\sum_{\lambda_2\in\mathbb{Z}}\dd_{k_2}(\lambda_2)x^{\lambda_2}
\end{align*}
Fix $r\in \Z$ and set
 $\mu = k_2(r + 1) - 1$. Equating coefficients of the same degree, we see that
 \begin{equation}\label{eq-ddconv}
     \dd_{k_1k_2}(k_2(r + 1) - 1) = \sum_{k_2\lambda_1 + \lambda_2 = k_2(r + 1) - 1}\dd_{k_1}(\lambda_1)\dd_{k_2}(\lambda_2).
 \end{equation}
The linear Diophantine equation $k_2\lambda_1 + \lambda_2 = k_2(r + 1) - 1$ for the integer unknowns $\lambda_1,\lambda_2$,
has the general solution $ \lambda_1 =(r+1)-\xi, \quad \lambda_2= -1+k_2\xi, \quad \xi \in \mathbb{Z}.$
 Thus, \eqref{eq-ddconv} becomes
$$\dd_{k_1k_2}(k_2(r + 1) - 1) = \sum_{\xi \in \mathbb{Z}}\dd_{k_1}((r + 1) - \xi)\dd_{k_2}(-1 + k_2\xi).$$
By \eqref{eq-Dk}, the function $\dd_{k_2}$ vanishes outside $[0, 2k_2-2]$.
Therefore, the only value of $\xi$ for which  $\dd_{k_2}(-1 + k_2\xi)$ is  nonzero is $\xi = 1$, so that we have 
$$\dd_{k_1k_2}(k_2(r + 1) - 1) = \dd_{k_1}((r+1)-1)\dd_{k_2}(-1 + k_2\cdot 1) = \dd_{k_1}(r)k_2,$$
where we have used the fact that $\dd_k(k - 1) = k$ from \eqref{eq-Dk}.
\end{proof}

\subsection{Formula for ``signature 1 domains"} Let 
 $k_1,\dots, k_n$ be positive integers with
 $ \gcd(k_1,\dots, k_n)=1.$
In \cite{pjm}, the Bergman kernel of the domain 
\begin{align*}\mathcal{H}_k=
   \{ (z_1,...,z_n)\in \D^n: \abs{z_1}^{k_1}<\abs{z_2}^{k_2}\cdots\abs{z_n}^{k_n}\}
\end{align*}
was computed. We now recapture this result starting from \eqref{eq-kernel2}.
\begin{prop}[See \cite{pjm}] We have
    \label{prop-austin}
    \begin{equation}
    \label{eq-austinkernel}
        K_{\mathcal{H}_k}(p,q)=\frac{1}{\pi^n\cdot L}\cdot \frac{\sum\limits_{\nu\in \N^{1\times n}}E(\nu)t^{\nu}} {\left(\prod\limits_{j=2}^n t_j^{k_j} - t_1^{k_1}\right)^2\cdot \prod\limits_{j=2}^n (1-t_j)^2},
\end{equation}
where $t=(t_1,\dots, t_n)^T$ with $t_j=p_j\cdot\ol{q_j}$, and
\begin{equation}
    \label{eq-CBAust}
    E(\nu)=\dd_{K}(2K- \ell_1(v_1+1)- 1) \cdot \prod\limits_{j=2}^n\dd_{\ell_j}(\ell_j(v_j+1)+\ell_1(v_1+1)-2K-1)
\end{equation}
with
\begin{equation}
    K=\lcm(k_1, ..., k_n), \qquad \ell_a=\frac{K}{k_a} \quad \text{ for  }  1\leq a \leq n, \qquad \text{ and } \quad L=\prod\limits_{a=1}^n\ell_a.
\end{equation}
\end{prop}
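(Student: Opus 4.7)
The strategy is to apply the main theorem directly to the domain $\mathcal{H}_k$ and then transform the resulting expression using the symmetries of $\dd$ provided by Lemma~\ref{lem-symmetry}. I first identify $\mathcal{H}_k$ as the monomial polyhedron $\Uu_B$ defined by the upper-triangular matrix whose first row is $(k_1, -k_2, \ldots, -k_n)$ (encoding $\abs{z_1}^{k_1}/(\abs{z_2}^{k_2}\cdots\abs{z_n}^{k_n}) < 1$) and whose remaining rows are the standard basis rows $e_2^T, \ldots, e_n^T$ (encoding $\abs{z_j}<1$ for $j \geq 2$; the condition $\abs{z_1}<1$ is then automatic). One checks $\det B = k_1 > 0$, and primitivity of each row follows from $\gcd(k_1, \ldots, k_n) = 1$. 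A direct computation gives $B_+ = \diag(k_1, 1, \ldots, 1)$ and $B_-$ supported only in its first row with entries $(0, k_2, \ldots, k_n)$. Substituting into the denominator of \eqref{eq-kernel2} produces exactly the denominator in \eqref{eq-austinkernel}.

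For the numerator, I compute $\adj B$, which is upper triangular with first row $(1, k_2, \ldots, k_n)$ and with $k_1$ along the lower diagonal. Its columns are $[\adj B]_1 = (1, 0, \ldots, 0)^T$ and, for $j \geq 2$, $[\adj B]_j$ has $k_j$ in the first coordinate and $k_1$ in the $j$-th coordinate. Since $\one B_- = (0, k_2, \ldots, k_n)$, the vector $(\nu - 2\one B_- + \one)$ equals $(\nu_1 + 1, \nu_2 - 2k_2 + 1, \ldots, \nu_n - 2k_n + 1)$, and its pairing with $[\adj B]_j$ yields $\nu_1 + 1$ for $j=1$ and $R_j + 1 := k_j(\nu_1+1) + k_1(\nu_j+1) - 2 k_1 k_j$ for $j \geq 2$. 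Formula \eqref{eq-CB} therefore produces
$$C_B(\nu) = \dd_{k_1}(\nu_1) \prod_{j=2}^n \dd_{k_1}(R_j).$$
Since $(\det B)^{n-1} = k_1^{n-1}$, the proof of \eqref{eq-austinkernel} reduces to establishing the identity $L \cdot C_B(\nu) = k_1^{n-1} E(\nu)$.

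The main obstacle is this last identity: matching the $\dd_{k_1}$-factors of $C_B(\nu)$ to the $\dd_K$- and $\dd_{\ell_j}$-factors appearing in $E(\nu)$, using Lemma~\ref{lem-symmetry}. For the first factor, applying part (2) with product $K = k_1 \ell_1$ yields $\dd_K(\ell_1(\nu_1 + 1) - 1) = \ell_1\, \dd_{k_1}(\nu_1)$, and part (1) rewrites the argument as $2K - \ell_1(\nu_1+1) - 1$. For the remaining factors, the key algebraic observation is
$$\ell_j(R_j + 1) \;=\; k_1(S_j + 1), \qquad S_j := \ell_j(\nu_j + 1) + \ell_1(\nu_1 + 1) - 2K - 1,$$
which follows at once from $K = k_j \ell_j = k_1 \ell_1$ by clearing denominators. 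Applying Lemma~\ref{lem-symmetry}(2) with product $k_1 \ell_j$ in two different ways---once with argument $\ell_j(R_j+1) - 1$ to get $\ell_j\, \dd_{k_1}(R_j)$, and once with argument $k_1(S_j+1) - 1$ to get $k_1\, \dd_{\ell_j}(S_j)$---forces $\ell_j\, \dd_{k_1}(R_j) = k_1\, \dd_{\ell_j}(S_j)$. Collecting these identities and multiplying across $j$ produces $E(\nu) = \bigl(L/k_1^{n-1}\bigr) C_B(\nu)$, which absorbs the discrepancy between $\frac{1}{\pi^n L}$ and $\frac{1}{\pi^n (\det B)^{n-1}}$ and completes the derivation of \eqref{eq-austinkernel}.
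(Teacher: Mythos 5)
Your proposal is correct and follows the same overall strategy as the paper: identify the defining matrix $B$ with first row $(k_1,-k_2,\dots,-k_n)$ and the rest the identity, compute $B_\pm$, $\adj B$, $\det B = k_1$, match denominators, and then reduce to the coefficient identity $L\,C_B(\nu) = k_1^{n-1}E(\nu)$ established term-by-term via Lemma~\ref{lem-symmetry}. The only noteworthy variation is in how you match the $j\geq 2$ factors: the paper chains through $\dd_K$ (applying Lemma~\ref{lem-symmetry}(2) once with product $k_j\ell_j = K$ and once with product $k_1\ell_1 = K$), whereas you chain through $\dd_{k_1\ell_j}$ by isolating the symmetric algebraic identity $\ell_j(R_j+1) = k_1(S_j+1)$ and applying Lemma~\ref{lem-symmetry}(2) to it from both sides; since $\ell_1/\ell_j = k_j/k_1$, the resulting relation $\ell_j\dd_{k_1}(R_j)=k_1\dd_{\ell_j}(S_j)$ is equivalent to the paper's $\ell_1\dd_{k_1}(R_j)=k_j\dd_{\ell_j}(S_j)$. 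Your version is arguably a touch cleaner, making the common argument of the intermediate $\dd$ explicit up front, but it is the same idea.
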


\begin{proof}[Proof of Proposition~\ref{prop-austin}]
%    We first compute $K_{\mathcal{H}_k}$ using \eqref{eq-kernel2}.
    %and then show that after some simplification, \eqref{eq-austinkernel} is the same as  this formula. 
   
 The domain $\mathcal{H}_k$ is the monomial polyhedron  with the defining matrix
\[B =
\begin{pmatrix}
 k_1 &\rvline & \begin{matrix}   -k_2 & \cdots & -k_n \end{matrix}\\
\hline
\begin{matrix}
  0 \\ \vdots \\ 0
\end{matrix} & \rvline &
  \begin{matrix} &&\\ &\Identity_{n-1} &\\
  &&
  \end{matrix}
 \end{pmatrix}, \text{ so that }  \  A=\adj B = 
\begin{pmatrix} 
 1 & \rvline & \begin{matrix}   k_2 & \cdots & k_n \end{matrix}\\
\hline
\begin{matrix}
  0 \\ \vdots \\ 0
\end{matrix} & \rvline &
  \begin{matrix} &&\\ & k_1\cdot \Identity_{n-1} &\\
  &&
  \end{matrix}
 
\end{pmatrix},\] 
where $\Identity_{n-1}$ is the identity matrix of size $n-1$. Splitting $B=B^+-B^-$, we see that:
    \begin{align*}
    \prod\limits_{j=1}^n \left(t^{ (b_-)^j}-t^{(b_+)^j}\right)^2 
    &=(t^{(0, k_2,...,k_n)} - t^{(k_1, 0, ..., 0)})^2 \cdot (t^{(0,...,0)}-t^{(0,1,0,...,0)})^2 \cdot ... \cdot (t^{(0,...,0)}-t^{(0,..,1)})^2
    \\&=(t_2^{k_2} t_3^{k_3}...t_n^{k_n} - t^{k_1})^2 \cdot (1-t_2)^2 \cdot ... \cdot (1-t_n)^2\\&
    =\left(\prod\limits_{j=2}^n t_j^{k_j} - t_1^{k_1}\right)^2\cdot \prod\limits_{j=2}^n (1-t_j)^2.
\end{align*}
Consequently, the denominator of $K_{\mathcal{H}_k}$ given by \eqref{eq-kernel2} 
is the same as that in \eqref{eq-austinkernel}.
Therefore, the expression given in \eqref{eq-kernel2} will be the same as that in \eqref{eq-austinkernel} provided:
\begin{equation*}
        \frac{C_B(\nu)}{ (\det B)^{n-1}} = \frac{E(\nu)}{ L}, \quad \nu \in \N^{1\times n}.
\end{equation*}
Now, using the fact that $\det B=k_1$, we have
\begin{align*}
    \frac{L}{(\det B)^{n-1}}=\frac{\prod\limits_{a=1}^n(\ell_a)}{(k_1)^{n-1}}
    =\frac{K^n}{\prod\limits_{a=1}^nk_a} \cdot \frac{1}{(k_1)^{n-1}}=\frac{K^n}{(k_1)^{n}} \cdot \frac{1}{\prod\limits_{a=2}^nk_a}=(\ell_1)^n\cdot \frac{1}{\prod\limits_{a=2}^nk_a}.
\end{align*}
So it will suffice to show that
\begin{equation}
    \label{eq-conj}
      (\ell_1)^n\cdot  {C_B(\nu)} = \left({\prod\limits_{a=2}^nk_a}\right)\cdot {E(\nu)}, \quad \nu \in \N^{1\times n}.
\end{equation}

To compute the LHS of \eqref{eq-conj}, denoting by $0_{n-1\times n}$ the matrix of size $n-1\times n$ with zero entries,  we have
\[    \one B_-= (1,\dots, 1) \begin{pmatrix}
        0 & k_2& \cdots & k_n\\ & \phantom{-}\large{0_{n-1\times n}}& 
    \end{pmatrix}= (0,k_2,\dots, k_n).\]
    Therefore, for $\nu\in \N^{1\times n}$,
    $ \nu-2\one B_-+\one= (\nu_1+1, \nu_2-2k_2+1,\dots, \nu_n-2k_n+1),$
    and consequently, using $\det B=1$, we have from \eqref{eq-CB}
    \begin{align}
    \ell_1^n\cdot C_B(\nu)&=  \ell_1^n\prod\limits_{j=1}^n \dd_{k_1}\left(\left(\nu-2\one B_{-}+\one\right)a_j-1\right)
    \nonumber\\&=  \ell_1\cdot\dd_{k_1}((\nu-2\one B_{-}+\one)a_1-1) \cdot  \ell_1^{n-1}\cdot\prod\limits_{j=2}^n \dd_{k_1}\left(\left(\nu-2\one B_{-}+\one\right)a_j-1\right)
\nonumber\\&= \left( \ell_1\dd_{k_1}(\nu_1)\right) \cdot \prod\limits_{j=2}^n \left(\ell_1\cdot\dd_{k_1} ((\nu_1+1)k_j+(\nu_j-2k_j+1)k_1 - 1)\right).\label{eq-carry}
\end{align}
Now, notice that
\begin{subequations}
    \begin{align}
    \dd_{K}(2K- \ell_1(\nu_1+1)- 1)&=\dd_{k_1\ell_1}(2K- 2 - (\ell_1(\nu_1+1)- 1))\label{eq-number}
    \\&=\dd_{k_1\ell_1}(\ell_1(\nu_1+1)- 1) \label{eq-number2}
    \\&=\ell_1\dd_{k_1}(\nu_1)\label{eq-no3},
\end{align}\end{subequations}

where \eqref{eq-number}$\Rightarrow$\eqref{eq-number2} is by part \eqref{item-sym1} of \ref{lem-symmetry}, and  \eqref{eq-number2} $\Rightarrow$\eqref{eq-no3} is by part \eqref{item-Dk1k2} of \ref{lem-symmetry}. Notice also that
\begin{subequations}
    \begin{align}
    &k_j\dd_{\ell_j}(\ell_j(\nu_j+1)+\ell_1(\nu_1+1)-2K-1)\nonumber
    \\&=\dd_{k_j\ell_j}(k_j((\ell_j(\nu_j+1)+\ell_1(\nu_1+1)-2K-1)+1)-1) \label{eq-number3} %Lemma 2.6
    \\&=\dd_{K}(k_j(\ell_j(\nu_j+1)+\ell_1(\nu_1+1)-2K)-1) \label{eq-number4}%K=\ell_jk_j% 
    \\&=\dd_{K}(K(\nu_j+1)+k_j\ell_1(\nu_1+1)-2Kk_j-1) \nonumber %distribute k_j% 
    \\&=\dd_{k_1\ell_1}(k_1\ell_1(\nu_j+1)+k_j\ell_1(\nu_1+1)-2k_1\ell_1k_j-1) \label{eq-number5} %rewrite K as k_1\ell_1
    \\&=\dd_{k_1\ell_1}(\ell_1(k_1(\nu_j+1)+k_j(\nu_1+1-2k_1))-1) \nonumber %factor out \ell_1%
    \\&=\ell_1\dd_{k_1}(k_1(\nu_j+1)+k_j(\nu_1+1-2k_1) - 1), \label{eq-number6}%Lemma 2.6
\end{align}\end{subequations}
where in \eqref{eq-number3} and \eqref{eq-number6}, we apply part \eqref{item-Dk1k2} of \ref{lem-symmetry}, and in \eqref{eq-number4}, we use the definition of $\ell_j$ to rewrite $k_j\ell_j$ as $K$, and in \eqref{eq-number5} to rewrite $K$ as $k_1\ell_1$.

Therefore, we have
\begin{align*}
    \eqref{eq-carry}&=  \dd_{K}(2K- \ell_1(\nu_1+1)- 1)\prod\limits_{j=2}^nk_j\dd_{\ell_j}(\ell_j(\nu_j+1)+\ell_1(\nu_1+1)-2K-1)\\
    &=\left(\prod\limits_{a=2}^nk_a\right)\cdot\dd_{K}(2K- \ell_1(\nu_1+1)- 1)\prod\limits_{j=2}^n\dd_{\ell_j}(\ell_j(\nu_j+1)+\ell_1(\nu_1+1)-2K-1)\\
    &=\left({\prod\limits_{a=2}^nk_a}\right)\cdot {E(\nu)},
\end{align*}
completing the proof.
\end{proof}

%%%%%%%%%%%%%%%%%%%%%%%%%%%

\subsection{The Park-Zhang formula for Generalized Hartogs Triangles}
Let $p_1,\dots, p_n$ be positive integers such that $\gcd(p_1,\dots,p_n)=1$. In 
\cite{park,zhang1}, the  domain
\begin{equation}
    \label{eq-gp}\mathcal{G}=\mathcal{G}_{p_1,\dots, p_n}=\{ (z_1, \dots, z_n) \in \mathbb{C}^n : \abs{z_1}^{p_1} < \dots < \abs{z_n}^{p_n} < 1\}
\end{equation}
was called the \emph{Generalized Hartogs Triangle}, its Bergman kernel was determined, and the regularity of the Bergman projection in $L^p$-spaces was studied. 
To state their formula for the kernel, we introduce the following notation:
\begin{equation}
    \label{eq-notation1}
    P= \prod_{j=1}^n p_j, \quad p_j' = \frac{P}{p_j}, \quad d_j = \gcd(p_j,p_{j+1}), \quad \text{ with } d_n = p_n.
\end{equation}
In \cite{park,zhang1}, $p_j'$ was denoted as $k_j$. 
Notice that we have
$ p_1p_1' = \dots = p_np_n'.$
Let 
\begin{equation}
    \label{eq-notation2}K = \prod_{j=1}^n p_j' = \frac{P^n}{\prod_{j=1}^n p_j}=P^{n-1},
\end{equation}
and also let for $1\leq j \leq n-1$
$$\displaystyle{k_j^{(j)} = \frac{p_j'}{\gcd(p_j',p_{j+1}')}, \quad  k_{j+1}^{(j)} = \frac{p_{j+1}'}{\gcd(p_j',p_{j+1}')},}$$ where we take $p_{n+1}' = 1$. Since 
$\displaystyle{\gcd(p_j',p_{j+1}')=\gcd\left(\frac{P}{p_j},\frac{P}{p_{j+1}}\right)= \frac{P}{\lcm(p_j,p_{j+1})}},$
we obtain
\begin{equation}
    \label{eq-kjj}
    k_j^{(j)}=  \frac{p_j'}{\gcd(p_j',p_{j+1}')}= \frac{\dfrac{P}{p_j}}{\dfrac{P}{\lcm(p_j,p_{j+1})}}=\frac{\lcm(p_j,p_{j+1})}{p_j}= \frac{p_{j+1}}{\gcd(p_j,p_{j+1})} = \frac{p_{j+1}}{d_j},
\end{equation}
% \begin{align*}
% k_j^{(j)}=  \frac{p_j'}{\gcd(p_j',p_{j+1}')} &= \frac{p_1 \dots p_{j-1}p_{j+1}\dots p_n}{\gcd(p_1\dots p_{j-1}p_{j+1}\dots p_n, p_1 \dots p_j p_{j+2} \dots p_n)} \\
%     &= \frac{p_1 \dots p_{j-1}p_{j+1}\dots p_n}{p_1\dots p_{j-1}p_{j+2}\dots p_n \gcd(p_j, p_{j+1})} \\
%     &= \frac{p_{j+1}}{\gcd(p_j,p_{j+1})} \\
%     &= \frac{p_{j+1}}{d_j}
% \end{align*}
where $p_j$ and $d_j$ are as  in \eqref{eq-notation1}. In a similar manner, one sees
\begin{equation}
    \label{eq-kjjbis}
    k_{j+1}^{(j)} = \frac{p_j}{d_j}.
\end{equation}
% $$K_{\mathcal{U}_B} = \frac{\sum_{\alpha_1 = 0}^{N_1}\cdots\sum_{\alpha_n = 0}^{N_n}\nu(P_1)\cdots\nu(P_n)t^{\alpha}}{\pi^n K (1 - t_n)^2\prod_{j=1}^{n-1}(t_j^{k_{j + 1}^{(j)}} - t_{j+1}^{k_j^{(j)}})^2}$$
\begin{prop}\label{prop-parkzhang} With $t$ as in \eqref{eq-kernel2}:
\begin{equation}
\label{eq-KG}
K_{\mathcal{G}}(p,q) = \frac{\sum\limits_{\alpha_1 = 0}^{N_1}\cdots\sum\limits_{\alpha_n = 0}^{N_n}\nu(P_1)\cdots\nu(P_n)t^{\alpha}}{\pi^n K (1 - t_n)^2\prod\limits_{j=1}^{n-1}\left(t_j^{k_{j + 1}^{(j)}} - t_{j+1}^{k_j^{(j)}}\right)^2},
\end{equation}
where we have, with $1\leq \ j \leq n$,
\begin{enumerate}
    \item $m_{j,j+1} = \lcm(p_j',p_{j+1}')$ (with $m_{n,n+1} = p_n'$),
    \item $\displaystyle{
N_1 = \left\lfloor \frac{2m_{1,2} - 1 - p_1'}{p_1'} \right\rfloor, \quad
N_j = \left\lfloor \frac{2m_{j-1,j} + 2m_{j, j+1} - p_j' - 2}{p_j'} \right\rfloor}$,

\item $\displaystyle{
P_1 = 2m_{1,2} - p_1' + 1 - p_1'\alpha_1, \quad
P_j = 2m_{j, j+1} - p_j' - p_j'\alpha_j + P_{j-1},
}$\label{item-rec}
\item 
$\displaystyle{
    \nu(P_j) = \begin{cases} 
            P_j - 1, & 2 \leq P_j \leq m_{j, j+1} + 1,\\
            2m_{j,j+1} - P_j + 1, & m_{j,j+1} + 2 \leq P_j \leq 2m_{j,j+1},\\
            0, & P_j < 2 \text{ or } P_j > 2m_{j,j+1}.
            \end{cases}
}$
\end{enumerate}
\end{prop}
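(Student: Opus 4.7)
The plan is to specialize the main theorem's formula \eqref{eq-kernel2} to the defining matrix of $\mathcal{G}_{p_1,\dots,p_n}$ and reconcile it with \eqref{eq-KG}, closely following the template of Proposition~\ref{prop-austin}. First, I would identify the defining matrix. The inequalities $\abs{z_j}^{p_j} < \abs{z_{j+1}}^{p_{j+1}}$ for $j < n$ together with $\abs{z_n}^{p_n} < 1$ yield the upper-bidiagonal matrix whose $j$-th row is $(0, \dots, 0, p_j, -p_{j+1}, 0, \dots, 0)$ for $j < n$ and whose $n$-th row is $(0, \dots, 0, p_n)$. Dividing row $j$ by $d_j$ (with $d_n = p_n$) yields the normalized matrix $B$ satisfying \eqref{eq-gcdbj1}, with $\det B = \prod_{j=1}^{n-1} (p_j/d_j)$; its adjugate $A = \adj B$ is upper triangular with nonnegative entries, consistent with Proposition~\ref{prop:covering-mono-polyh}(1).

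Matching the denominators is then direct. For $j < n$ the factor $(t^{(b_-)^j} - t^{(b_+)^j})^2$ equals $(t_{j+1}^{p_{j+1}/d_j} - t_j^{p_j/d_j})^2 = (t_j^{k_{j+1}^{(j)}} - t_{j+1}^{k_j^{(j)}})^2$ by \eqref{eq-kjj}--\eqref{eq-kjjbis}, and for $j = n$ the factor is $(1 - t_n)^2$, so the denominator of \eqref{eq-kernel2} for this $B$ coincides with that of \eqref{eq-KG}.

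For the numerator, I would set $\sigma := \nu - 2\one B_{-} + \one$; the structure of $B$ gives $\sigma_1 = \nu_1 + 1$ and $\sigma_k = \nu_k + 1 - 2p_k/d_{k-1}$ for $k \geq 2$. Because $A$ is upper triangular, each column $a_j = [\adj B]_j$ has nonzero entries only in positions $1, \dots, j$, so each factor $\dd_{\det B}(\sigma a_j - 1)$ of $C_B(\nu)$ depends only on $\nu_1, \dots, \nu_j$, mirroring the recursive dependence of $P_j$ on $\alpha_1, \dots, \alpha_j$ in item \ref{item-rec}. I would then compute $a_j$ explicitly from the bidiagonal structure, set $\alpha_k = \nu_k$, and verify that $\sigma a_j - 1$ can be rewritten in the form $\mu_j\bigl((P_j - 2) + 1\bigr) - 1$ for a suitable positive integer $\mu_j$, opening the door to Lemma~\ref{lem-symmetry}(2) (together with part (1) applied to the $j = 1$ factor, as in Proposition~\ref{prop-austin}) to convert each $\dd_{\det B}$ into a multiple of $\dd_{m_{j,j+1}}(P_j - 2) = \nu(P_j)$. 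The bounds \eqref{eq-nujbounds} translate, under $\alpha_j = \nu_j$, to $0 \leq \alpha_j \leq N_j$. The multiplicative constants extracted by Lemma~\ref{lem-symmetry} should combine with the prefactor $1/(\det B)^{n-1}$ in \eqref{eq-kernel2} to produce $1/K$, the compensating ratio being exactly $K/(\det B)^{n-1} = \bigl(p_n \prod_{j=1}^{n-1} d_j\bigr)^{n-1}$.

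The main obstacle will be the explicit algebraic identification of $\sigma a_j - 1$ with an expression of the form $\mu_j((P_j - 1)) - 1$ demanded by Lemma~\ref{lem-symmetry}(2), together with verifying that the product of the extracted constants is exactly the compensating ratio above. The telescoping identity $P_j - P_{j-1} = 2 m_{j,j+1} - p_j'(1 + \alpha_j)$ should arise naturally from the column structure of $\adj B$ combined with the shift in $\sigma$. This bookkeeping among the quantities $p_j$, $p_j'$, $d_j$, $m_{j,j+1}$, $K$, and $\det B$ is delicate, but the proof of Proposition~\ref{prop-austin} provides a reliable blueprint, and the $n = 2$ case can serve as a sanity check against Proposition~\ref{prop-rasha}.
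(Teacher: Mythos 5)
Your overall strategy matches the paper's: specialize \eqref{eq-kernel2} to the bidiagonal defining matrix of $\mathcal{G}$, match denominators via \eqref{eq-kjj}--\eqref{eq-kjjbis}, and reconcile the coefficients $C_B(\nu)$ with $\nu(P_1)\cdots\nu(P_n)$ using Lemma~\ref{lem-symmetry}. Your identification of the compensating constant $K/(\det B)^{n-1}=\Lambda^{n-1}$ (with $\Lambda=\prod_j d_j$) is also exactly what is needed.

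However, the concrete mechanism you describe for applying Lemma~\ref{lem-symmetry}(2) has the divisibility the wrong way around, and as written it would not go through. You propose to find a positive integer $\mu_j$ such that $\sigma a_j - 1 = \mu_j\bigl((P_j - 2)+1\bigr) - 1$, and then apply part (2) with $k_2 = \mu_j$ to convert $\dd_{\det B}$ into $\mu_j\,\dd_{m_{j,j+1}}(P_j-2)$. This would require $m_{j,j+1} = \det B/\mu_j$, i.e., $\mu_j = \det B / m_{j,j+1} = (P/\Lambda)/(P/d_j) = d_j/\Lambda$, which is not an integer (indeed it is $\leq 1$) unless all $d_i$ with $i\ne j$ equal $1$. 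The actual divisibility is the reverse: $\det B = P/\Lambda$ divides $m_{j,j+1} = P/d_j$, with quotient $d_j' = \Lambda/d_j$. So part (2), applied with $(k_1,k_2)=(\det B,\,d_j')$, gives $\dd_{m_{j,j+1}}\bigl(d_j'(r+1)-1\bigr) = d_j'\,\dd_{\det B}(r)$: the scaling factor sits inside the argument of the \emph{larger}-modulus function. The identity you should aim to verify is therefore of the form $d_j'\cdot\sigma a_j - 1 = 2m_{j,j+1}-P_j$, where the right-hand side is the image of $P_j-2$ under the reflection $r\mapsto 2m_{j,j+1}-2-r$ from part (1). Related to this, part (1) of the lemma is needed for \emph{every} $j$ in this proposition (the reflection is what flips the sign of the accumulated sum in $P_j$), unlike in Proposition~\ref{prop-austin}, where a single application to $j=1$ sufficed. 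With these two corrections, the final bookkeeping closes: $\prod_{j=1}^n d_j' = \Lambda^{n-1}$, so $\prod_j\dd_{\det B}(\sigma a_j - 1) = \Lambda^{-(n-1)}\prod_j\nu(P_j)$, which combines with the prefactor $(\det B)^{-(n-1)}$ to give $1/K$, exactly as you anticipated.
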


%%%%%%%%%%%%%%%
%%%%%%%%%%%%%%%%
\begin{proof} From $(2)$ and $(4)$, we see that $\nu(P_j)=0$ unless $0\leq \alpha_j \leq N_j$. Therefore, we can replace the sum in the numerator of \ref{eq-KG} with one over all natural numbers. We let
$\Lambda = \prod\limits_{j = 1}^nd_j, \ d_j' = \frac{\Lambda}{d_j}.$ One sees easily that $\mathcal{G}$ is a monomial polyhedron defined by the upper triangular matrix
$$
B = \begin{pmatrix}
    \frac{p_1}{d_1} & -\frac{p_2}{d_1} & & & \\
     & \frac{p_2}{d_2} & -\frac{p_3}{d_2} & & \\
     & & \ddots & \ddots & \\
     & & & \frac{p_{n-1}}{d_{n-1}} & -\frac{p_n}{d_{n-1}} \\
     & & & & \frac{p_n}{d_n}
\end{pmatrix}, \quad \text{ so that }\quad
\adj B = 
\begin{pmatrix}
    \frac{p_1'}{d_1'} & \frac{p_1'}{d_2'} & \cdots & \cdots & \frac{p_1'}{d_n'}\\
     & \frac{p_2'}{d_2'} & \cdots & \cdots & \frac{p_2'}{d_n'}\\
     & & \ddots & \ddots & \vdots \\
     & & & \ddots & \vdots \\
     & & & & \frac{p_n'}{d_n'}
\end{pmatrix}.
$$
Therefore we have   $B_+=\diag\left( \frac{p_1}{d_1},\dots, \frac{p_n}{d_n}\right)$, and
$$
% B_+ = 
% \begin{pmatrix}
%     \frac{p_1}{d_1} & & & & \\
%      & \ddots & & & \\
%      & & \ddots & & \\
%      & & & \ddots & \\
%      & & & & \frac{p_n}{d_n} 
% \end{pmatrix}, \ \
B_- = 
\begin{pmatrix}
    0 & \frac{p_2}{d_1} & & & \\
     & \ddots & \ddots & & \\
     & & \ddots & \ddots & \\
     & & & \ddots & \frac{p_n}{d_{n-1}} \\
     & & & & 0 
\end{pmatrix}, \quad \text{ so } \one B_-= \left(0, \frac{p_2}{d_1},\frac{p_3}{d_2}, \dots,\frac{p_n}{d_{n-1}}\right).
$$
From the expressions for 
$B_+$ and $B_-$, we have
\begin{align*}
    \prod\limits_{j=1}^n \left(t^{ (b_-)^j}-t^{(b_+)^j}\right)^2&=\prod_{j=1}^{n-1}\left(t_{j+1}^{\frac{p_{j+1}}{d_j}} - t_j^{\frac{p_j}{d_j}}\right)^2\cdot \left(1 - t_n^{\frac{p_n}{d_n}}\right)^2= \left(1 - t_n\right)^2\prod_{j=1}^{n-1}\left(t_{j+1}^{\frac{p_{j+1}}{d_j}} - t_j^{\frac{p_j}{d_j}}\right)^2\\
    &=(1 - t_n)^2\prod_{j=1}^{n-1}\left(t_j^{k_{j + 1}^{(j)}} - t_{j+1}^{k_j^{(j)}}\right)^2 \text{using \eqref{eq-kjj} and \eqref{eq-kjjbis},}
\end{align*}
where in the first line we have used that $d_n=p_n.$
Since $\det(B) = \frac{P}{\Lambda}$,  \eqref{eq-kernel2} gives
$$K_{\mathcal{G}}(p,q) =\frac{1}{\pi^n\cdot {\left(\dfrac{P}{\Lambda}\right)^{n-1}}}\cdot \frac{\sum\limits_{\alpha\in \N^{1\times n}}C_B(\alpha)t^{\alpha}}{\prod\limits_{j=1}^n (t^{ (b_-)^j}-t^{(b_+)^j})^2},$$
which, since $K=P^{n-1}$, coincides with the Park-Zhang expression for the kernel
\eqref{eq-KG} if and only if  for each $\alpha\in \N^{1\times n}$ 
\begin{equation}
    \label{eq-conj1}
    \Lambda^{n-1} \cdot C_B(\alpha) = \nu(P_1)\cdots\nu(P_n).
\end{equation}
We now claim that:
\begin{equation}\label{eq-nu_to_D}
\nu(P_j)  =
     d_j'\cdot \dd_{\frac{P}{\Lambda}}\left(\frac{1}{d_j'}\left[\left(\sum_{i=1}^jp_i'\alpha_i\right) - 2 \left(\sum_{i=1}^{j-1}\frac{P}{d_i}\right) + \left(\sum_{i=1}^jp_i'\right) \right] - 1\right).
\end{equation}
To see this, first observe that for $1\leq j\leq n$, we have
    $m_{j,j+1} = \lcm\left(p_j',p_{j+1}'\right) = \lcm\left(\frac{P}{p_j},\frac{P}{p_{j+1}}\right) = \frac{P}{d_j},$
    and also  by \eqref{eq-Dk}:
    \begin{align}
    \dd_{m_{j,j + 1}}(P_j - 2) &= \begin{cases} 
            P_j - 1, & 0\leq P_j - 2 \leq m_{j,j+1}-1,\\
            2m_{j,j+1} - P_j + 1, & m_{j,j+1} \leq P_j - 2 \leq 2m_{j,j+1} - 2,\\
            0, & P_j - 2 < 0 \text{ or } P_j - 2 > 2m_{j,j+1}-2
            \end{cases} \nonumber \\
            &=\nu(P_j) \label{eq-D_as_nu}.
    \end{align}
Using our description of $m_{l,l+1}$, we have 
$P_1 = -p_1'\alpha_1 + 2\frac{P}{d_1} - p_1' + 1$,
so  upon expanding the recursion in part (\ref{item-rec}), we find
$P_j = -\left(\sum_{i=1}^jp_i'\alpha_i\right) + 2 \left(\sum_{i=1}^j\frac{P}{d_i}\right) - \left(\sum_{i=1}^jp_i'\right) + 1$. Therefore, we conclude, applying \ref{eq-D_as_nu}, that
    \begin{align}
    \nu(P_j) 
            &=\dd_{\frac{P}{d_j}} \left(-\left(\sum_{i=1}^jp_i'\alpha_i\right) + 2 \left(\sum_{i=1}^j\frac{P}{d_i}\right) - \left(\sum_{i=1}^jp_i'\right) - 1\right)\nonumber \\
            &= \dd_{\frac{P}{d_j}} \left(2\frac{P}{d_j} - 2 -\left[\left(\sum_{i=1}^jp_i'\alpha_i\right) - 2 \left(\sum_{i=1}^{j-1}\frac{P}{d_i}\right) + \left(\sum_{i=1}^jp_i'\right) - 1\right] \right)\nonumber \\
            &= \dd_{\frac{P}{d_j}} \left(\left(\sum_{i=1}^jp_i'\alpha_i\right) - 2 \left(\sum_{i=1}^{j-1}\frac{P}{d_i}\right) + \left(\sum_{i=1}^jp_i'\right) - 1\right) \label{ex-sym1} \\
            &= \frac{d_j'}{d_j'} \dd_{\frac{P}{d_j}}\left(\left(\sum_{i=1}^jp_i'\alpha_i\right) - 2 \left(\sum_{i=1}^{j-1}\frac{P}{d_i}\right) + \left(\sum_{i=1}^jp_i'\right) - 1\right) \nonumber \\
            &= d_j'\cdot \dd_{\frac{P}{d_jd_j'}}\left(\frac{1}{d_j'}\left[\left(\sum_{i=1}^jp_i'\alpha_i\right) - 2 \left(\sum_{i=1}^{j-1}\frac{P}{d_i}\right) + \left(\sum_{i=1}^jp_i'\right) - 1 + 1\right] - 1\right) \label{ex-sym2} \\
            &= d_j'\cdot \dd_{\frac{P}{\Lambda}}\left(\frac{1}{d_j'}\left[\left(\sum_{i=1}^jp_i'\alpha_i\right) - 2 \left(\sum_{i=1}^{j-1}\frac{P}{d_i}\right) + \left(\sum_{i=1}^jp_i'\right)\right] - 1\right), \nonumber
\end{align}
where \ref{ex-sym1} follows from part \ref{item-sym1} of \ref{lem-symmetry} and \ref{ex-sym2} from part \ref{item-Dk1k2} of \ref{lem-symmetry}.
% Using $\Lambda = d_jd_j'$. Further, as $\prod_{j=1}^nd_j' = \frac{\Lambda^n}{\Lambda} = \Lambda^{n-1}$, and the above result holds for all $j \in \{1, \dots, n \}$
Using the expression for $\one B_-$ and that $\det B= \frac{P}{\Lambda}$, for $\alpha \in \mathbb{Z}^{1\times n}$, we have by \eqref{eq-CB}:
\begin{align*}
C_B(\alpha) &= \prod\limits_{j=1}^n \dd_{\frac{P}{\Lambda}}\left(\left(\alpha-2\one B_{-}+\one\right)[\adj B]_j-1\right) \\
    &= \prod\limits_{j=1}^n \dd_{\frac{P}{\Lambda}}\left(\left(\alpha_1 + 1, \alpha_2 - 2\frac{p_2}{d_2} + 1, \dots, \alpha_n - 2\frac{p_n}{d_{n-1}} +1 \right)[\adj B]_j-1\right) \\
    &= \prod\limits_{j=1}^n \dd_{\frac{P}{\Lambda}}\left(\frac{1}{d_j'}\left(p_1'(\alpha_1 + 1) + p_2'(\alpha_2 - 2\frac{p_2}{d_2} + 1) + \dots + p_j'(\alpha_j - 2\frac{p_j}{d_{j-1}} +1) \right) - 1 \right) \\
    &= \prod\limits_{j=1}^n \dd_{\frac{P}{\Lambda}}\left(\frac{1}{d_j'}\left((p_1'\alpha_1 + \dots + p_j'\alpha_j) - 2P\left(\frac{1}{d_1} + \dots + \frac{1}{d_{j-1}}\right) + (p_1' + \dots + p_j') \right) - 1 \right)\\
    &=  \prod\limits_{j=1}^n \frac{1}{d_j'} \nu(P_j)= \frac{1}{\Lambda^{n-1}}\cdot \prod\limits_{j=1}^n \nu(P_j), \text{ using \ref{eq-nu_to_D}},
\end{align*}
which is what we wanted to prove.
\end{proof}
\bibliographystyle{alpha}
\bibliography{monomial}

\begin{thebibliography}{CKMM20}

\bibitem[Alm23]{almughrabi}
Rasha Almughrabi.
\newblock Bergman kernels of two dimensional monomial polyhedra, 2023.
\newblock To appear in \emph{Complex Analysis and Operator Theory}; avaiable
  online at \url{https://arxiv.org/abs/2303.14268}.

\bibitem[BCEM22]{summer20}
Chase Bender, Debraj Chakrabarti, Luke Edholm, and Meera Mainkar.
\newblock {$L^p$}-regularity of the {B}ergman projection on quotient domains.
\newblock {\em Canad. J. Math.}, 74(3):732--772, 2022.

\bibitem[Bel82]{belltransactions}
Steven~R. Bell.
\newblock The {B}ergman kernel function and proper holomorphic mappings.
\newblock {\em Trans. Amer. Math. Soc.}, 270(2):685--691, 1982.

\bibitem[Bel84]{bellrat}
Steven Bell.
\newblock Proper holomorphic mappings that must be rational.
\newblock {\em Trans. Amer. Math. Soc.}, 284(1):425--429, 1984.

\bibitem[Bre55]{bremmerman}
H.~J. Bremermann.
\newblock Holomorphic continuation of the kernel function and the {B}ergman
  metric in several complex variables.
\newblock In {\em Lectures on functions of a complex variable}, pages 349--383.
  University of Michigan Press, Ann Arbor, Mich., 1955.

\bibitem[CE23]{mbp}
Debraj {Chakrabarti} and Luke~D. {Edholm}.
\newblock {Projections onto $L^p$-Bergman spaces of Reinhardt Domains}.
\newblock {\em arXiv e-prints}, page arXiv:2303.10005, March 2023.

\bibitem[CEM19]{ChEdMc19}
D.~Chakrabarti, L.~D. Edholm, and J.~D. McNeal.
\newblock Duality and approximation of {B}ergman spaces.
\newblock {\em Adv. Math.}, 341:616--656, 2019.

\bibitem[Che17]{Chen17}
Liwei Chen.
\newblock The {$L^p$} boundedness of the {B}ergman projection for a class of
  bounded {H}artogs domains.
\newblock {\em J. Math. Anal. Appl.}, 448(1):598--610, 2017.

\bibitem[CKMM20]{pjm}
Debraj Chakrabarti, Austin Konkel, Meera Mainkar, and Evan Miller.
\newblock Bergman kernels of elementary {R}einhardt domains.
\newblock {\em Pacific J. Math.}, 306(1):67--93, 2020.

\bibitem[CKY20]{CKY19}
Liwei Chen, Steven~G. Krantz, and Yuan Yuan.
\newblock {$L^p$} regularity of the {B}ergman projection on domains covered by
  the polydisc.
\newblock {\em J. Funct. Anal.}, 279(2):108522, 20, 2020.

\bibitem[CZ16]{chakzeytuncu}
Debraj Chakrabarti and Yunus~E. Zeytuncu.
\newblock {$L^p$} mapping properties of the {B}ergman projection on the
  {H}artogs triangle.
\newblock {\em Proc. Amer. Math. Soc.}, 144(4):1643--1653, 2016.

\bibitem[DM23]{dallara}
G.~Dall'Ara and A.~Monguzzi.
\newblock Nonabelian ramified coverings and {$L^p$}-boundedness of {B}ergman
  projections in {$\mathbb{C}^2$}.
\newblock {\em J. Geom. Anal.}, 33(2):Paper No. 52, 28, 2023.

\bibitem[Edh16a]{Edh16}
Luke~D. Edholm.
\newblock {B}ergman theory of certain generalized {H}artogs triangles.
\newblock {\em Pacific J. Math.}, 284(2):327--342, 2016.

\bibitem[Edh16b]{lukethesis}
Luke~David Edholm.
\newblock {\em The {B}ergman kernel of fat {H}artogs triangles}.
\newblock ProQuest LLC, Ann Arbor, MI, 2016.
\newblock Thesis (Ph.D.)--The Ohio State University.

\bibitem[EM16]{EdhMcN16}
L.~D. Edholm and J.~D. McNeal.
\newblock The {B}ergman projection on fat {H}artogs triangles: ${L}^p$
  boundedness.
\newblock {\em Proc. Amer. Math. Soc.}, 144(5):2185--2196, 2016.

\bibitem[EM17]{EdMc1}
L.~D. Edholm and J.~D. McNeal.
\newblock Bergman subspaces and subkernels: degenerate {$L^p$} mapping and
  zeroes.
\newblock {\em J. Geom. Anal.}, 27(4):2658--2683, 2017.

\bibitem[EM20]{EdhMcN20}
L.~D. Edholm and J.~D. McNeal.
\newblock Sobolev mapping of some holomorphic projections.
\newblock {\em J. Geom. Anal.}, 30(2):1293--1311, 2020.

\bibitem[EXX21]{eben}
Peter {Ebenfelt}, Ming {Xiao}, and Hang {Xu}.
\newblock {Algebraic Bergman kernels and finite type domains in
  $\mathbb{C}^2$}.
\newblock {\em arXiv e-prints}, page arXiv:2111.07175, November 2021.

\bibitem[Fu94]{fu94}
Siqi Fu.
\newblock A sharp estimate on the {B}ergman kernel of a pseudoconvex domain.
\newblock {\em Proc. Amer. Math. Soc.}, 121(3):979--980, 1994.

\bibitem[Fu14]{fu}
Siqi Fu.
\newblock Estimates of invariant metrics on pseudoconvex domains near
  boundaries with constant {L}evi ranks.
\newblock {\em J. Geom. Anal.}, 24(1):32--46, 2014.

\bibitem[HKZ00]{zhubergman}
Haakan Hedenmalm, Boris Korenblum, and Kehe Zhu.
\newblock {\em Theory of {B}ergman spaces}, volume 199 of {\em Graduate Texts
  in Mathematics}.
\newblock Springer-Verlag, New York, 2000.

\bibitem[Joh83]{johnson}
Charles~R. Johnson.
\newblock Sign patterns of inverse nonnegative matrices.
\newblock {\em Linear Algebra Appl.}, 55:69--80, 1983.

\bibitem[Kra13]{krantzbergman}
Steven~G. Krantz.
\newblock {\em Geometric analysis of the {B}ergman kernel and metric}, volume
  268 of {\em Graduate Texts in Mathematics}.
\newblock Springer, New York, 2013.

\bibitem[NP09]{nagelduke}
Alexander Nagel and Malabika Pramanik.
\newblock Maximal averages over linear and monomial polyhedra.
\newblock {\em Duke Math. J.}, 149(2):209--277, 2009.

\bibitem[NP20]{nagelpramanik}
Alexander Nagel and Malabika Pramanik.
\newblock Bergman spaces under maps of monomial type.
\newblock {\em The Journal of Geometric Analysis}, 2020.

\bibitem[Par18]{park}
Jong-Do Park.
\newblock The explicit forms and zeros of the {B}ergman kernel for
  3-dimensional {H}artogs triangles.
\newblock {\em J. Math. Anal. Appl.}, 460(2):954--975, 2018.

\bibitem[Zey20]{zeytuncu20}
Yunus~E. Zeytuncu.
\newblock A survey of the {$L^p$} regularity of the {B}ergman projection.
\newblock {\em Complex Anal. Synerg.}, 6(2):Paper No. 19, 7, 2020.

\bibitem[Zha21a]{zhang1}
Shuo Zhang.
\newblock {$L^p$} boundedness for the {B}ergman projections over
  {$n$}-dimensional generalized {H}artogs triangles.
\newblock {\em Complex Var. Elliptic Equ.}, 66(9):1591--1608, 2021.

\bibitem[Zha21b]{zhang2}
Shuo Zhang.
\newblock Mapping properties of the {B}ergman projections on elementary
  {R}einhardt domains.
\newblock {\em Math. Slovaca}, 71(4):831--844, 2021.

\end{thebibliography}
\end{document}